\newtheorem{defi}{Definition}[section]
\newtheorem{theo}{Theorem}[section]%
\newtheorem{lemma}[theo]{Lemma}%
\newtheorem{rem}[theo]{Remark}%
\newcommand{\C}{\mathbb C}
\newcommand{\R}{\mathbb R}
\newcommand{\N}{\mathbb N}
\newcommand{\Ss}{\mathbb S}
\begin{document}
\title{Asymptotic Behavior of Multiplicative Spherical Integrals and S-transform }
\author{Jonathan Husson}
\address[Jonathan Husson]{Universit\'e de Lyon, ENSL, CNRS,  France}
\email{Jonathan.Husson@ens-lyon.fr}
\thanks{This work was supported in part by ERC Project LDRAM : ERC-2019-ADG Project 884584}

\maketitle

\begin{abstract}
	In this note, we study the asymptotics of a spherical integral that is a multiplicative counterpart to the well-known Harish-Chandra Itzykson Zuber integral. This counterpart can also be expressed in terms the Heckman-Opdam hypergeometric function. When the argument of this spherical integral is of finite support and of order $N$, these asymptotics involve a modified version of the $S$-transform of the limit measure of the matrix argument and its largest eigenvalue. To prove our main result, we are leveraging a technique of successive conditionning used in \cite{GuiHus}. In particular we prove in a "mathematically rigorous" manner a result from \cite{MerPot} in the case $\beta =1,2$ and we generalize it for multiple arguments. 
	\end{abstract}
\section{Introduction}

If $A$ and $B$ are two real or complex self-adjoint $N \times N$ matrices, the Harish-Chandra-Itzykson-Zuber integral is defined as: 
\[ HCIZ(A,B) := \int_{\mathcal{U}_N^{\beta}} \exp(N Tr(A U B U^*)) dU \]
where $dU$ is the Haar measure on either the orthogonal or the unitary group. Introduced by Harish-Chandra in \cite{Har}, and studied then by Itzykson and Zuber in \cite{ItzZub} who gave an explicit formula in the complex case, this integral has proven to be an interesting object in a wide variety of fields, from algebraic geometry \cite{HCIZalg,HCIZalg2} to physics \cite{Eyn}. In random matrix theory, it is a powerful tool when one looks at the large deviations of the spectrum. In \cite{GuiMai} Guionnet an Maida proved that when $A_N$ and $B_N$ are $N\times N$ matrices and $B_N$ is a matrix of rank one, the asymptotical behavior of $HCIZ(A_N,B_N)$ for large $N$ depends on the largest eigenvalue of $A_N$ and on the limit of its empirical measure through its $R$-transform. This result has since been extended by Guionnet and the author in \cite{GuiHus} to matrices $B_N$ with finite rank. It has been instrumental in the proofs of large deviations principles for the largest eigenvalue in various random matrix models \cite{HuGu,Hu,Giulio}. The HCIZ integrals are also related to Schur polynomials and the theory of special functions. In this paper, we study a multiplicative counterpart of the HCIZ integral that was comparatively less studied.  This integral is defined for a self-adjoint definite $N \times N$ positive matrix, $X$ and $\bar{\theta} = (\theta_1,...,\theta_N)\in \R^N$ by: 
\[ I_N(X,\bar{\theta}) = \int_{\mathcal{U}_N^{\beta}} \prod_{i=1}^N \det( [U^* X U]_i^{\theta_i - \theta_{i+1}} ) ^{\frac{\beta N}{2}}d U \]
where $[A]_i$ is the top left $i \times i$ block of $A$. The "multiplicative" denomination comes from the fact that if one takes two matrices $X$ and $Y$ and $U$ a Haar-distributed unitary matrix, one has: 

\[ \int_{\mathcal{U}_N^\beta} I_N(Z(U), \bar{\theta}) dU = I_N(X,\bar{\theta}) I_N(Y,\bar{\theta}).\]
where $Z(U)$ is a diagonal matrix with the same spectrum as $U^* X U Y$. When $\theta_i = \lambda_i$ with $\lambda$ a signature (that is a $N$-tuple of integers such that $\lambda_i \geq ... \geq\lambda_N$), this spherical integral can be expressed in function of the Jack polynomials $j_{\lambda}^{(\beta/2)}$. If we set $\textbf{x} = (x_1,...,x_N)$ the eigenvalues of $X$ then we have:

\[ I_N(X,\lambda) = \frac{ j_{\lambda}^{ ( \frac{\beta}{2} )}( \textbf{x})}{ j^{ ( \frac{\beta}{2} )}_{\lambda}( \textbf{1}_N)} \]
where $\textbf{1}_N= (1,...,1)$. 
In particular, when $\beta = 2$, we get back the Schur polynomials since $j^{(1)}_{\lambda}= s_{\lambda}$. For arguments that are not signatures, this spherical integral can also be expressed as a shifted version of the Heckman-Opdam hypergeometric function $\mathcal{F}^{(\beta)}_{\textbf{a}}(z)$ \cite{HeOp,OpBook}. Namely, we have that: 

\[  I_N(X,\bar{\theta}) = \mathcal{ F}^{(\beta)}_{ \ln \textbf{x}}(  \bar{\theta} - \rho) \]

where $ \ln \textbf{x} = ( \ln x_1,...,\ln x_N)$ and $\rho_i = \beta/2 (i -1 )$. The asymptotics of Schur and  Jack polynomials have been investigated in the literature, (see for instance \cite{2015}  for Schur polynomials and \cite{cuenca2017pieri,2018} for Jack polynomials and McDonald polynomials). However, our result deal with the logarithm of this integral divided by $N$ when the argument $\bar{\theta}$ has finite support whereas the preceding results of \cite{2015} deal with the case of $ \ln \textbf{x}$ having finite support. 
Another way to see this is to notice that we are interested in limits of the form:

\[   \lim_{ N \to \infty} \frac{1}{N} \ln  \mathcal{ F}^{(\beta)}_{\textbf{a}} \Big( \frac{\beta}{2}  ( \theta_1 N , \theta_2 N -1, ..., \theta_k N - k +1, -k,..., - N+ 1) \Big) .\]

Whereas previous results from \cite{2015,GuiHus} deal with limits of a different renormalization of the Heckman-Opdam function, that is: 
\[  \lim_{ N \to \infty} \frac{1}{N} \ln \frac{ \mathcal{ F}^{(\beta)}_{\textbf{a}}( \frac{\beta}{2}  ( \theta_1 N , \theta_2 N , .. \theta_k N , 0, 0,..., 0) )}{\mathcal{ F}^{(\beta)}_{\textbf{a}} (0, 0, 0,..., 0)  } .\]

   We direct the reader to \cite{MerPot} and the reference and the references therein for more information on the links between the quantity $I_N$ and special functions. In \cite{MerPot}, Mergny and Potters study the asymptotics of these integrals for a one-dimensional parameter $\bar{\theta}$ (that is a parameter whose support is at most on the first coordinate). This study reveals a modified version of the $S$-transform of the eigenvalue distribution and the largest eigenvalue. They also conjecture that for finite-dimensional parameters, the behavior of the log of this integral should be the sum of the one-dimensional behaviors. In this note we prove again the result of Potters and Mergny, (whose original proof make use of non-convergent contour integral) with mathematically rigorous tools and we also address their conjecture. It turns out that in this finite rank case, we have the same phenomenon as the one described for the HCIZ integral in \cite{GuiHus}. Asymptotically, the behavior of the log is indeed the sum of the one-dimensional behaviors but where we pair the $j$-th largest of the positive $\theta_i$ with the $j$-th largest eigenvalue of $X$ and the $j$-th smallest of the negative $\theta_i$ with the $j$-th smallest eigenvalue of $X$ (which correspond the the conjecture formulated in \cite{MerPot} if the extremal eigenvalues stick to the support of the limiting measure).

This proof follows the general philosophy of \cite{GuiHus} and we will do a recursion on the dimension of the argument $\bar{\theta}$. After the definition of the multiplicative spherical integral and the statement of the result in section \ref{def}, we will initialize our recursion and  prove the result in the one-dimensional case in section \ref{rank1}. For this we will proceed then in two steps: first, in subsection \ref{rank1dis} we will discretize the spectrum of the matrix $X_N$ in order to treat a simpler, finite-dimensional large deviation problem involving Dirichlet laws, and then in subsection \ref{rank1cont}, we will treat the generic case using discrete approximations and continuity arguments. Then, in section \ref{rank2}, we will assume that our theorem stands for the $k-1$-dimensional case and we will prove it for the $k$-dimensional one. There again we proceed first by discretization in subsection \ref{rank2dis} and then approximation and continuity in subsection \ref{rank2cont}. In subsection \ref{rank2dis}, we will use our recursion hypothesis by conditioning the first column of $U$ and reducing the problem to the study of another $k-1$-dimensional integral $I_{N-1}(\tilde{\theta},Y_N)$, where the spectrum of $Y_N$ is interlaced with the spectrum of $X_N$. Here, the main difference with \cite{GuiHus} is that we need to take into account the fact that contrary to the HCIZ integral, this integral is \emph{not} symmetrical in its argument $\theta$ which adds a bit of technicality in subsection \ref{rank2dis}. Furthermore the interlacing relationship between the spectra of $X_N$ and $Y_N$ will also be different.

\subsection{Notations}
 We will denote $\mathcal{H}_N^{\beta}$ the set of symmetric matrices of $\mathcal{M}_N(\R)$ if $\beta=1$ and the set of Hermitian matrices of $\mathcal{M}_N(\C)$ if $\beta =2$. We also denote $\mathcal{H}_N^{\beta,+}$ and $\mathcal{H}_N^{\beta,++}$ respectively the positive and definite positive matrices of $\mathcal{H}_N^{\beta}$. We will also denote $\mathcal{U}_N^{\beta}$ the orthogonal group if $\beta =1$ and the unitary group when $\beta =2$. For a matrix $X$ in $\mathcal{H}_N^{\beta}$ and $i \in \llbracket 1,N \rrbracket$, we will denote $\lambda_i(X)$ its $i$-th eigenvalue, $\chi_X(t) = \det(t - X)$ its characteristic polynomial and $\mu_{X} := N^{-1} \sum_{i=1}^N \delta_{\lambda_i(X)}$ its eigenvalue distribution. For $n \in \N^*$, we denote $\Ss^{\beta n -1}$ either the real sphere of dimension $n$ or the complex sphere of dimension $n$.  We will also denote $\R^+$ the set of non-negative real numbers and $\R^{+,*}$ the set $\R^+ \setminus \{0 \}$.

  If $\mu$ is a probability measure on $\R$, we will denote $r(\mu)$ the rightmost point of its support and $l(\mu)$ the leftmost point of its support. We denote $G_{\mu}$ its Stieljes transform, that is, for $z \in \C$ outside of the support of $\mu$: 
 
 \[ G_{\mu}(z) = \int_{t \in \R} \frac{1}{ z - t} d\mu(t) .\]
 
 Furthermore if $z = r(\mu)$ or $z = l(\mu)$, we define $G_{\mu}(z)$ as the following limits on the real line:
 
 \[ G_{\mu}( r(\mu)) = \lim_{z  \to r(\mu)^+} G_{\mu}(z) \text{ and }  G_{\mu}( l(\mu)) = \lim_{z  \to l(\mu)^-} G_{\mu}(z) .\]
  In particular, we allow $G_{\mu}(r(\mu)) = + \infty$ and $G_{\mu}(l(\mu)) = - \infty$. 

If, $\mu \in \mathcal{P}(\R^+)$, we also denote $T_{\mu}(z)= z G_{\mu}(z) - 1$. We define $T_{\mu}(r(\mu))$ and $T_{\mu}(l(\mu))$ as we did for $G_\mu$. We also denote, for  $\tilde{S}_{\mu}$ its modified $S$-transform defined on $[T_{\mu}(l(\mu)), T_{\mu}(r(\mu))] \setminus \{ 0, -1 \}$ by: 

\[ \tilde{S}_{\mu}(z) = \frac{z}{z + 1} T_{\mu}^{-1}(z) \]
where $T_{\mu}^{-1}$ is the inverse function of $T_{\mu}$ defined on $[T_{\mu}(l(\mu)), T_{\mu}(r(\mu))] \setminus \{ 0\}$. If $l(\mu) > 0$, doing a Taylor expansion of $T_{\mu}^{-1}$ in $-1$ show that we can extend $\tilde{S}_{\mu}$ analytically in $-1$ by $\tilde{S}_{\mu}(-1) = (\int d \mu(t)/t)^{-1}$ . We can also prove that $\tilde{S}$ can be analytically  extended on $0$ by $\tilde{S}_{\mu}(0)= \int t d \mu(t)$. Furthermore, derivating $\tilde{S}$ gives that it is an  increasing function on $] T_{\mu}(l(\mu)), T_{\mu}(r(\mu))[$.   
  
\section{Definitions and statement of the result}\label{def}
Given the preceding notations, we can define the multiplicative spherical integral on the group $\mathcal{U}^{\beta}_N$.
\begin{defi}
	For $\beta =1   \text{ or } 2$, $X_N \in \mathcal{H}_N^{\beta,++}$, $k \leq N$. $\overline{\theta} = (\theta_1, \theta_2,..., \theta_k)\in \R^k$ :
	\[ I_N(\overline{\theta}, X_N) = \int_{\mathcal{U}^{\beta}_N} \Delta_{\overline{\theta}}(U^* X_N U)^{\beta N/2} dU \]
	where $dU$ is the Haar measure on $\mathcal{U}_N^{\beta}$ and $\Delta_{a}(M)$ is defined for $M \in \mathcal{H}_N^{\beta,++}$ and $a \in \R^k$ by
	\[ \Delta_{a}(M) = \prod_{i=1}^{k} \det( [M]_i)^{a_i - a_{i+1}}  \]
	where $[M]_i$ is the $i \times i$ matrix obtained by extracting the first $i$ rows and the first $i$ columns of $M$ and with the convention $a_{k+1} =0$. 
	\end{defi}

 Here is our main result regarding the behavior of the multiplicative spherical integral. 
 \begin{theo}\label{maintheo} 
 	Let $\beta =1$ for the real case and $\beta =2$ for the complex case. Let $\overline{\theta}= (\theta_1,...,\theta_k)\in \R^k$, $\sigma$ a permutation of $ \llbracket 1,k \rrbracket$ and $m\in \llbracket 1,k \rrbracket$ such that: 
 	\[ \theta_{\sigma(1)} \leq \theta_{\sigma(2)}\leq ... \leq \theta_{\sigma(m)} \leq 0 \leq \theta_{\sigma(m+1)} \leq ... \leq \theta_{\sigma(k)} .\]
 	Let $(X_N)_{N \in \N}$ be a sequence of matrices in $\mathcal{H}^{\beta,++}_N$ such that there exists $M \in \R^+$ so that $||X_N||, ||X_N^{-1}||\leq M$ for all $N$ and such that there exists $\lambda_1,... \lambda_k \in \R^{+,*}$ such that:
 		 \[ \forall i \in [1,m], \lim_{N \to \infty} \lambda_i(X_N) = \lambda_i \]
 		 and \[ \forall i \in [m+1,k], \lim_{N \to \infty} \lambda_{N +i -k}(X_N) = \lambda_i \]
 		 then: 
 		 \[\lim_{N \to \infty} \frac{1}{N} \log I_N(\bar{\theta},X_N) =\frac{\beta}{2} \sum_{i=1}^k J(\theta_{\sigma(i)}, \lambda_i, \mu) \]
 	where $J$ is defined for $(\theta, \lambda, \mu)$ in the following set :
 	\[ \{ (\theta, \lambda, \mu) \in   \R^+ \times \R^{+,*} \times \mathcal{P}(\R^+) :  r(\mu) \leq \lambda \} \cup \{ (\theta, \lambda, \mu) \in   \R^- \times \R^{+,*} \times \mathcal{P}(\R^+) :  l(\mu) \geq \lambda, r(\mu) < + \infty \} \]
 	by 
 	\[ J(\theta, \lambda, \mu)
 	:= (\theta+1) \log c - \log |\theta| - \int \log \Big| \frac{\theta+1}{\theta} c - x \Big| d \mu(x) \]
 	 for $\theta \neq 0, -1$ with: 
 	\[ c := \begin{cases} \frac{\theta}{\theta +1} \lambda \text{ when } \theta \leq T_{\mu}(\lambda) \leq 0 \text{ and } 0 \leq T_{\mu}(\lambda) \leq \theta \\
 	\tilde{S}_{\mu}(\theta) \text{ otherwise}. \end{cases}
 	\]	
 	For $\theta = -1$, we can extend $J$ by continuity in $\theta$ by letting:
 	
 	\[ J(-1, \lambda, \mu) = - \int \log |x| d \mu(x). \]
 	For $\theta =0$, we can extend $J$ by continuity by letting : 
 	\[ J(0, \lambda, \mu) = 0 .\] 
 \end{theo}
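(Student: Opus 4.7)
The plan is to follow the recursive approach outlined by the author: induct on the dimension $k$ of the argument $\bar\theta$. The base case $k=1$ is a direct one-dimensional Laplace/large-deviation problem, and the inductive step reduces the $k$-dimensional integral to a $(k-1)$-dimensional one by conditioning on the first column of $U$. In both steps, one first establishes the claim when the spectrum of $X_N$ is a finitely supported discretisation of $\mu$, and then removes the discretisation by a continuity argument exploiting the uniform bounds $\|X_N\|,\|X_N^{-1}\|\leq M$.

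\textbf{Base case.} For $k=1$ the integrand reduces to $(v^*X_Nv)^{\theta\beta N/2}$ with $v$ the first column of $U$, uniform on $\Ss^{\beta N-1}$. Expanding $v$ in the eigenbasis of $X_N$ realises $(|v_i|^2)_i$ as a Dirichlet vector with parameters $(\beta/2,\ldots,\beta/2)$. After discretising $\mu$, the standard large-deviation principle for the push-forward empirical measure $\nu_N := \sum_i |v_i|^2\,\delta_{\lambda_i(X_N)}$ (with rate function essentially $\tfrac{\beta}{2}H(\nu\mid\mu)$) combined with Varadhan's lemma leads to a finite-dimensional variational problem whose Euler-Lagrange equation reads $T_\mu\!\bigl(\tfrac{\theta+1}{\theta}c\bigr)=\theta$, equivalently $c=\tilde S_\mu(\theta)$. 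Imposing additionally that the outlier $\lambda$ may pin the optimiser yields the two-case formula in the statement: interior saddle $\tilde S_\mu(\theta)$ when $|T_\mu(\lambda)|>|\theta|$, and pinned value $\lambda\theta/(\theta+1)$ otherwise. Evaluating the functional at the optimiser produces exactly $\tfrac{\beta}{2}J(\theta,\lambda,\mu)$; the extensions at $\theta\in\{0,-1\}$ are obtained by Taylor expansion.

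\textbf{Inductive step.} Parametrise $U=(v\,|\,V)$ with $v\in\Ss^{\beta N-1}$ and $V$ an orthonormal frame on $v^\perp$, Haar-distributed conditionally on $v$. A Schur-complement computation on $U^*X_NU$ gives the multiplicative factorisation
\[
\Delta_{\bar\theta}(U^*X_NU)\;=\;(v^*X_Nv)^{\theta_1}\,\Delta_{\tilde\theta}(Y_N),
\]
with $\tilde\theta:=(\theta_2,\ldots,\theta_k)\in\R^{k-1}$ and $Y_N:=V^*\!\bigl(X_N-\tfrac{X_Nvv^*X_N}{v^*X_Nv}\bigr)V\in\mathcal{H}_{N-1}^{\beta,++}$, whose eigenvalues interlace those of $X_N$ (via the matrix-determinant lemma). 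The inner factor is, conditionally on $v$, a $(k-1)$-dimensional multiplicative spherical integral on $Y_N$, and the induction hypothesis controls its asymptotics uniformly for $v$ away from the extremal eigenvectors of $X_N$. A Laplace argument in $v$ then concentrates $v$ on an extremal eigenvector of $X_N$ (top or bottom according to the sign of $\theta_1$), picking up a scalar factor $\tfrac{\beta}{2}J(\theta_1,\lambda_\ast,\mu)$ for the corresponding extremal eigenvalue $\lambda_\ast$. Combining with the inductive exponent for $\tilde\theta$ and reindexing produces the claimed sum $\tfrac{\beta}{2}\sum_i J(\theta_{\sigma(i)},\lambda_i,\mu)$.

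\textbf{Main obstacles.} The hardest step is the discretisation-plus-continuity argument in the base case: the Dirichlet LDP rate function has logarithmic singularities near the spectrum boundary, and passing from a finitely-supported approximation of $\mu$ to $\mu$ itself requires the uniform spectral bound $\|X_N\|,\|X_N^{-1}\|\leq M$ together with a careful truncation/tightness argument. Continuity of $J$ across the pinning threshold $T_\mu(\lambda)=\theta$ is a second delicate point, needed to certify that no jump appears in the limit — the two branches of the definition of $c$ must be shown to match there. A final subtlety, absent from the HCIZ analysis in \cite{GuiHus}, is that $I_N$ is not symmetric in $\bar\theta$: one cannot permute the coordinates to put the extremal ones first, so the induction must carefully track which coordinate of $\bar\theta$ is stripped off and which extremal eigenvalue of $X_N$ it pairs with, which is precisely why the statement involves the explicit permutation $\sigma$ and the nontrivial interlacing for $Y_N$.
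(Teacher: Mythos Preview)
Your base case matches the paper's approach closely and is fine.

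The inductive step, however, has a genuine gap. You write that ``a Laplace argument in $v$ then concentrates $v$ on an extremal eigenvector of $X_N$ \ldots\ picking up a scalar factor $\tfrac{\beta}{2}J(\theta_1,\lambda_\ast,\mu)$ for the corresponding extremal eigenvalue $\lambda_\ast$.'' This is not what happens, and in general it would give the wrong answer. The theorem says that $\theta_1$ must pair with the eigenvalue whose rank matches the rank of $\theta_1$ among the ordered $\theta_{\sigma(i)}$'s, not with the top (or bottom) eigenvalue. Since $I_N$ is \emph{not} symmetric in $\bar\theta$, you cannot permute the $\theta_i$ to arrange that $\theta_1$ is extremal; the first coordinate stripped off is whatever $\theta_1$ happens to be.

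What the paper actually does is the following. After the Schur-complement factorisation, the inner integral $I_{N-1}(\tilde\theta,Y_N)$ depends on $v$ through the extremal eigenvalues $\chi_{-m},\ldots,\chi_{k-m-1}$ of $Y_N$, which interlace the $\mu_i$ and are continuous functions of the Dirichlet vector $\gamma$. Varadhan's lemma then gives a joint variational problem over $(\gamma,\chi)$, with the $\chi$ constrained by interlacing equations. The key step you are missing is a specific change of variables $(\gamma,\chi)\mapsto(\bar\gamma,\chi)$, built from a rational function $F$ with zeros at the $\mu_j^{-1}$ and poles at the $\chi_j^{-1}$, that decouples the problem into a product domain. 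The $\bar\gamma$-optimisation then reproduces the one-dimensional problem and yields $J(\theta_1,\mu_{l+1},\mu)$, i.e.\ $\theta_1$ paired with the \emph{first} outlier above the bulk, not the correct one. The correction comes from optimising each $\chi_i$ separately: defining $H_i(\chi)=J(\theta_{\sigma(i)},\chi,\mu)-(\theta_1+1)\ln\chi+\int\ln|\chi-x|\,d\mu(x)$, one shows via a monotonicity analysis that for those $i$ with $\theta_{\sigma(i)}\ge\theta_1$ the maximiser sits at the right endpoint of its interlacing interval, while for $\theta_{\sigma(i)}\le\theta_1$ it sits at the left endpoint or at $T_\mu^{-1}(\theta_1)$. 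The resulting extra terms telescope and effectively swap $\theta_1$ up from $\mu_{l+1}$ to its correct partner $\mu_{l+p-m+1}$, shifting the intervening $\theta_{\sigma(i)}$'s down by one slot. This swap mechanism is the technical heart of the induction, and your sketch does not contain it.
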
	
One can notice in particular that for values of $\theta$ between $0$ and $T_{\mu}(\lambda)$, $J(\theta, \lambda, \mu)$ does not depend on $\lambda$. In particular, if we differentiate in $\theta$, we can see that 
\[ \frac{\partial J}{\partial \theta}(\theta, \lambda, \mu) = \log \tilde{S}_{\mu}(\theta) .\]
Therefore: 

\begin{rem}
	If $\theta \in [0, T_{\mu}(\lambda)]$ for $\theta > 0$ or if $\theta \in [ T_{\mu}(\lambda),0]$ for $\theta < 0$, we have: 
	\[ J(\theta, \lambda, \mu) = \int_0^\theta \log \tilde{S}_{\mu}(t)d t. \]
	\end{rem}
This expression is similar to the expression found in \cite[Theorem 6]{GuiMai} which expresses the limit of classical spherical integrals using the integral of the $R$-transform. Before further studying this integral, one can make a first useful remark about its regularity with respect to the parameters $\overline{\theta}$ and the eigenvalues of $X_N$.

\begin{rem}\label{equicont}
 	\begin{itemize}
 		Let $\overline{\theta}, \overline{\phi} \in \R^k$ and $X_N,Y_N \in \mathcal{H}_N^{\beta,++}$ two definite positive matrices. 
 		\item If there is $\epsilon >0$ such that for all $i \in [1,N]$, 
 		\[ e^{- \epsilon} \leq \frac{\lambda_i(X_N)}{\lambda_i(Y_N)} \leq e^{\epsilon} \]
 		then 
 		\[ \left| I_N(X_N,\overline{\theta}) - I_N(Y_N,\overline{\theta}) \right| \leq M \epsilon \]
 		where $M = k \max_{i=1}^{k-1} |\theta_{i+1} - \theta_i|.$ 
 		\item if for all $i \in [1,k]$
 		\[|\theta_i - \phi_i| \leq \epsilon \]
 		then 
 		\[ \left| I_N(X_N,\overline{\phi}) - I_N(X_N,\overline{\theta}) \right| \leq M' \epsilon \]
 		where $M' = 2k \max_{i=1}^N| \log \lambda_i(X_N)|. $
 	\end{itemize}	
 \end{rem}

\begin{proof}
	For the first point, let us compare $\det([X_N]_i)$ and $\det([Y_N]_i)$ for $i \leq k$. There exists an orthonormal basis $e_1,...,e_i \in \C^i$ such that:
	\[ \det([X_N]_i)= \prod_{j=1}^i \langle e_j, [X_N]_i e_j \rangle  .\]
	But the hypothesis implies that for every $u \in \C^N$, $\langle u, X_N u\rangle \geq e^{- \epsilon} \langle u, Y_N u\rangle$ and this imply that for every $u \in \C^i$, $\langle u, [X_N]_i u \rangle \geq e^{- \epsilon} \langle u, [Y_N]_i u \rangle$. Therefore, we can write: 
	
	\[ \det([X_N]_i) \geq e^{-i  \epsilon}  \prod_{j=1}^i \langle e_j, [Y_N]_i e_j \rangle \geq  e^{- i \epsilon} \det( [Y_N]_i) .\]
	Where we used Hadamard's inequality for the last inequality. Switching the roles of $X_N$ and $Y_N$, we also have that $\det( [X_N]_i) \leq \exp(i \epsilon) \det([Y_N]_i)$. It follows then easily that $\exp( M \epsilon) \Delta_{\overline{\theta}}(Y_N) \geq \Delta_{\overline{\theta}}(X_N) \geq \exp(- M \epsilon) \Delta_{\overline{\theta}}(Y_N)$ which implies our result.

	For the second point, it suffices to remark that:
	
	\[ \exp( - M' /2) \leq  \det[X_N]_i \leq \exp( M '/2) .\]
	\end{proof}
\section{Proof in the case $k =1$}\label{rank1}
 
 We will begin by proving the Theorem \ref{maintheo} in the case $k=1$ assuming that the spectra of the $X_N$ are all included in some finite set $\{ \mu_1, ..., \mu_l \}$ in the subsection \ref{rank1dis}. Then, in the subsection \ref{rank1cont}, we will generalize this to general matrices using approximations and continuity arguments for the integral $I_N$ and its limit $J$. This case will be refered as the one-dimensional case.
 \subsection{The case of spectra included in a finite set}\label{rank1dis}
 
Using the unitary invariance of the Haar measure, without loss of generality, we can assume that $X_N$ is diagonal.
In this subsection, we assume that there exists $0 < \mu_1 ... < \mu_l$ and integers  $\alpha_i(N)$ such that $\alpha_1(N)+...+\alpha_l(N) = N$ and $\lim_{N \to \infty} \alpha_i(N)/ N = \alpha_i \geq 0$ so that $X_N$ is of the form  
\[ X_N= diag( \underbrace{\mu_1,...,\mu_1}_{\alpha_1(N) \text{ times }},...,\underbrace{\mu_l,...,\mu_l}_{\alpha_l(N) \text{ times }} ).\]
There, we have the convergence of the empirical $\mu_{X_N}$ toward $\mu = \sum_{i=1}^l \alpha_i \delta_{\mu_i}$. Furthermore, let $\theta \in \R$. If $\theta \geq 0$ we assume that $\alpha_l(N) > 0$ for $N$ large enough and if $\theta <0$, we assume that $\alpha_1(N) > 0$. Then we want to prove the following lemma which corresponds to Theorem \ref{maintheo} for $k =1$ 
\begin{lemma}\label{lemrank1}
	If $\theta \geq 0$, then: 
	\[ \lim_{N \to \infty} \frac{1}{N} \log  I_N( \theta, X_N) = \frac{\beta}{2} J(\theta, \mu_l, \mu) .\] 

If $\theta \leq 0$, then: 
\[ \lim_{N \to \infty} \frac{1}{N} \log  I_N( \theta, X_N) = \frac{\beta}{2}J(\theta, \mu_1, \mu). \] 
\end{lemma}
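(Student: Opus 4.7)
Since $k=1$, we have $\Delta_\theta(M) = \det([M]_1)^\theta = \langle e_1, Me_1\rangle^\theta$. Substituting $M = U^*X_NU$ and setting $u := Ue_1$, which is uniformly distributed on $\Ss^{\beta N - 1}$ by Haar invariance, gives
\[ I_N(\theta, X_N) = \int_{\Ss^{\beta N - 1}} \langle u, X_N u\rangle^{\beta N \theta/2}\, du. \]
Grouping the indices into blocks $B_1, \ldots, B_l$ where $B_i$ collects the $\alpha_i(N)$ positions at which $X_N$ has eigenvalue $\mu_i$, and setting $y_i := \sum_{j \in B_i} |u_j|^2$, one has $\langle u, X_N u\rangle = \sum_i \mu_i y_i$. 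The vector $(y_1, \ldots, y_l)$ follows a Dirichlet distribution with parameters $(\beta\alpha_i(N)/2)_i$, so
\[ I_N(\theta, X_N) = \frac{\Gamma(\beta N/2)}{\prod_i \Gamma(\beta\alpha_i(N)/2)} \int_{\Delta_l} \Big(\sum_i \mu_i y_i\Big)^{\beta N\theta/2} \prod_i y_i^{\beta\alpha_i(N)/2 - 1}\, dy, \]
where $\Delta_l$ denotes the standard simplex. This reduces the problem to a finite-dimensional Laplace-type estimate.

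A standard Stirling estimate for the prefactor together with Laplace's method (or equivalently Varadhan's lemma applied to the Dirichlet LDP) yields
\[ \lim_{N \to \infty} \frac{1}{N}\log I_N(\theta, X_N) = \frac{\beta}{2}\Big[\sup_{y \in \Delta_l} F(y) - \sum_i \alpha_i \log\alpha_i\Big], \quad F(y) := \theta\log\Big(\sum_i \mu_i y_i\Big) + \sum_i \alpha_i \log y_i. \]
The entropy term in $F$ drives $F$ to $-\infty$ on the boundary of $\Delta_l$, so the supremum is attained at an interior critical point $y^*$.

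I then identify $y^*$ via Lagrange multipliers. Writing $s := \sum_i \mu_i y_i^*$, the first-order conditions give $y_i^* = \alpha_i s/(\tilde\lambda - \theta\mu_i)$; enforcing $\sum y_i^* = 1$ and $\sum \mu_i y_i^* = s$ forces $\tilde\lambda = s(\theta+1)$ and
\[ \int \frac{x}{\hat c - x}\, d\mu(x) = \theta, \qquad \hat c := \frac{s(\theta+1)}{\theta}. \]
The left-hand side is $T_\mu(\hat c)$, so $\hat c = T_\mu^{-1}(\theta)$ and hence $s = \tilde S_\mu(\theta)$. Substituting back cancels the $\sum \alpha_i \log\alpha_i$ term and gives
\[ \lim_{N\to\infty} \frac{1}{N}\log I_N(\theta, X_N) = \frac{\beta}{2}\Big[(\theta+1)\log s - \log|\theta| - \int \log|\hat c - x|\, d\mu(x)\Big], \]
which is exactly $\frac{\beta}{2} J(\theta, \mu_l, \mu)$ in the theorem's notation, via the identifications $c = s = \tilde S_\mu(\theta)$ and $\tfrac{\theta+1}{\theta}c = \hat c$. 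The $\tilde S_\mu$ branch of the definition of $J$ applies here because $\lambda = \mu_l = r(\mu)$ forces $T_\mu(\lambda) = +\infty$, so the side condition $T_\mu(\lambda) \leq \theta$ never triggers. The case $\theta < 0$ is treated symmetrically (the optimizer concentrates near $\mu_1$, with $T_\mu(l(\mu)) = -\infty$ again selecting the $\tilde S_\mu$ branch), and $\theta = 0$ is trivial since $I_N(0, X_N) = 1$.

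The main obstacle is the careful justification of Laplace's method near the boundary of the simplex, where the Dirichlet density is singular and $F$ diverges; this is handled by the coercivity of $F$ and standard exponential estimates on compact subsets of the interior. A secondary subtlety is that for $\theta < 0$ the objective $F$ is not globally concave, but the admissibility conditions $\tilde\lambda - \theta\mu_i > 0$ single out a unique branch of $T_\mu^{-1}$ and hence a unique interior critical point, so the variational identification still goes through.
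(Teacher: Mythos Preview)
Your reduction to the Dirichlet distribution and the Varadhan/Laplace step are exactly the paper's approach, but there is a genuine gap in the optimization analysis. You write that ``the entropy term in $F$ drives $F$ to $-\infty$ on the boundary of $\Delta_l$, so the supremum is attained at an interior critical point,'' and later that ``$\lambda=\mu_l=r(\mu)$ forces $T_\mu(\lambda)=+\infty$.'' Both claims rely on the tacit assumption that every $\alpha_i>0$. The lemma's hypotheses only require $\alpha_i(N)/N\to\alpha_i\ge 0$ (with merely $\alpha_l(N)>0$ for large $N$), so one may perfectly well have $\alpha_l=0$. In that case the term $\alpha_l\log y_l$ vanishes identically, $F$ stays finite on the face $\{y_l=0\}$, and $\mu_l$ lies strictly above $r(\mu)$, making $T_\mu(\mu_l)$ finite.

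This is not a cosmetic omission: the outlier case $\alpha_l=0$ is exactly where the two branches of $J$ separate. When $\alpha_l=0$ the optimizer may or may not put mass on the coordinate $y_l$, and the paper shows that the dichotomy is governed by the sign of $\theta-T_\mu(\mu_l)$: if $\theta>T_\mu(\mu_l)$ the optimum has $\bar\gamma_l>0$ and one obtains $c=\frac{\theta}{\theta+1}\mu_l$, while if $\theta\le T_\mu(\mu_l)$ one has $\bar\gamma_l=0$ and $c=\tilde S_\mu(\theta)$. Your interior Lagrange computation recovers only the second branch, and your equation $T_\mu(\hat c)=\theta$ has no admissible solution with $\hat c>r(\mu)$ once $\theta>T_\mu(r(\mu))$. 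To fix the argument you must treat the indices with $\alpha_i=0$ separately (the paper first collapses all such indices except possibly $l$ onto $y_l$), and then carry out the boundary-versus-interior case analysis for $y_l$. The analogous issue arises for $\theta<0$ with $\alpha_1=0$.
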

\begin{proof}

 We will denote $I_i^N$ the following intervals: 
\[ I_i^N = \rrbracket \sum_{j=1}^{i-1} \alpha_j(N), \sum_{j=1}^i \alpha_j(N) \rrbracket . \]
If we denote $e$ the first column of $U$, we have that 
\[ \det([U^* X_N U]_{1})^{\frac{\beta\theta N}{2}} = \langle e, X_N e \rangle^{\frac{\beta\theta N}{2}} = \Big( \sum_{j=1}^l \mu_i \gamma_i \Big)^{\frac{\beta N \theta}{2}}, \]
where the $\gamma^N_i$ are defined as follows: 
\[ \gamma^N_i = \sum_{j \in I^N_i} |e_j|^2. \]
$(\gamma^N_1,...,\gamma^N_l)$ is a random variable whose distribution is a Dirichlet law of parameters $( \beta \alpha_1(N)/2,..., \beta \alpha_l(N) /2 )$. Therefore, following for instance \cite[Theorem 11]{GuiHus} it satisfies a large deviations principle on the $l-1$-dimensional simplex $K := \{ \gamma \in (\R^{+})^l : \gamma_1 + ... + \gamma_l = 1\}$ with good rate function: 
\[ \frac{\beta}{2}H(\gamma) = - \frac{\beta}{2}\sum_{i=1}^l \alpha_i \log \frac{\gamma_i}{\alpha_i} .\]

Therefore, using Varadhan's lemma we have that:

\[ \lim_{N \to \infty} \frac{1}{N} \log I_N(\theta, X_N) = \frac{\beta}{2} \sup_{\gamma \in K} f_{\theta}(\gamma) \text{ where} f_{\theta}(\gamma) = \Big[\theta \log  \Big( \sum_{i=1}^l \gamma_i \mu_i \Big) - H( \gamma) \Big]. \]

Since the function $f_{\theta}$ is proper, and continuous from $K$ to $\R \cup \{ - \infty \}$ and since it is upper bounded by $|\theta \max \{ \log \mu_1, \log \mu_l \}|$, its supremum is necessarily achieved at some $\overline{\gamma}$. Since a priori, there may be several maximum arguments, we choose one $\overline{\gamma}$ arbitrarily. However we will end up proving that there is indeed only one maximal argument possible.

Let us first deal with the case of a positive $\theta$. Let $A$ be the set of indices $i$ such that $\alpha_i =0$. Then we can assume $\bar{\gamma}_i = 0$ for all $ i \in A \setminus \{ l\}$. Indeed for any $\gamma \in K$, if we construct $\psi \in K$ by letting $\psi_l = \sum_{i\in A \setminus \{l \}} \gamma_i + \gamma_l $, $\psi_i = 0$ for $i \in  A \setminus \{l\}$ and $\psi_i = \gamma_i$ for all other indices, we have easily that $f_{\theta}(\psi) \geq f_{\theta}(\gamma)$. Therefore, up to reasoning on $K \cap \R^{ \llbracket 1, l \rrbracket \setminus ( A \setminus \{ l \})}$ rather that on $K$, we can assume that $\alpha_i >0$ for all $i$ except maybe $l$. Then, there are two possibilities: either $\bar{\gamma}_l =0$ or $\bar{\gamma}_l >0$. If $\nabla f_{\theta}$ is the gradient of $f_\theta$ in the ambient space $\R^l$, we have that: 

\[ \forall i =1,...,l, \, \nabla f_{\theta}(\gamma) _i = \frac{\theta \mu_i }{\sum_{j=1}^l \gamma_j \mu_j } + \frac{\alpha_i}{\gamma_i} .\]

Then, writing the critical point equation for each cases, we have that there exists $a \in \R$ such that respectively: 

\begin{align*}
 \text{ if } \bar{\gamma}_k = 0, & \, \forall i =1,...,l-1, \, \nabla f_{\theta}(\bar{\gamma})_i = a,  \\\text{ if } \bar{\gamma}_k > 0, &\, \forall i =1,...,l, \, \nabla f_{\theta}(\bar{\gamma})_i = a.  \end{align*}

In both cases, it implies $\sum_{i=1}^l  \bar{\gamma}_i \nabla f_{\theta}(\bar{\gamma})_i = a $ and we get that $a = \theta +1$. If we let $c = \sum_{j=1}^l  \mu_j \bar{\gamma}_j$ we have that for every $i =1,...,l-1$:
\[ \frac{\theta \mu_i}{c} + \frac{\alpha_i}{\bar{\gamma}_i} = \theta + 1 .\]
We remind that if $\alpha_i > 0$ then $\bar{\gamma}_i >0$. Indeed, if not we would have $f_{\theta}(\bar{\gamma}) = - \infty$. Let us assume that $\bar{\gamma}_l >0$ and $\alpha_l =0$ then:
\[ c = \frac{\theta \mu_l}{\theta +1} \]
and by definition of $c$, we must have: 
\begin{eqnarray*} c &=& \sum_{i=1}^{l} \mu_i \bar{\gamma}_i \\
	& = &\sum_{i =1}^{l-1} \frac{\mu_i \alpha_i}{\theta +1 - \theta \frac{\mu_i}{c}} + \mu_l \bar{\gamma}_l \\
	& > & \frac{1}{\theta +1}\sum_{i =1}^{l-1} \frac{\mu_i \alpha_i}{1 -  \frac{\mu_i}{\mu_l}} \\
	& > & \frac{c}{\theta} \sum_{i=1}^{l-1} \frac{\mu_i \alpha_i}{\mu_l - \mu_i} \\
	& >& \frac{c}{\theta} T_{\mu}(\mu_l).
	\end{eqnarray*}
Therefore, we necessarily have $\theta > T_{\mu}(\mu_l)$. If now we assume that either $\bar{\gamma}_l =0$ (and therefore $\alpha_l = 0$) or $\alpha_l > 0$ (and therefore $\bar{\gamma}_l >0$), then we have
\begin{eqnarray*} c &=& \sum_{i=1}^l  \mu_i \bar{\gamma}_i \\
	& = &\sum_{i =1}^{l} \frac{\mu_i \alpha_i}{\theta +1 - \theta \frac{\mu_i}{c}} 
\end{eqnarray*}
so that 
\[ \theta = T_{\mu}(\frac{\theta +1}{\theta} c ) .\]
Furthermore, since $\alpha_{i} > 0$, for $i =1,...,l-1$, we necessarily have $\overline{\gamma}_i > 0$ for $i =1,...,l-1$. Therefore, if $u = (-(l-1)^{-1},...,-(l-1)^{-1},1))$, $\overline{\gamma} + \epsilon u \in K$ for $\epsilon >0$ small enough and so, differentiating in $\epsilon \to 0^+$, we must have that: 
\[ \langle \nabla f_{\theta} ( \bar{\gamma}) , u \rangle \leq 0 \]
	which implies that 
	\[ \theta \frac{\mu_l}{c} - (\theta +1) \leq 0 \]
	that is $\mu_l \leq \frac{\theta +1}{\theta}c$  so that, applying $T_{\mu}$ we have $\theta \leq T_{\mu}( \mu_l)$. 
	Therefore, since at least one maximal argument $\bar{\gamma}$ always exists, we have the following dichotomy for every possible $\bar{\gamma}$: 
\begin{itemize}
	\item If $\theta \leq T_{\mu}( \mu_l)$, then either $\bar{\gamma}_l =0$ or $\alpha_l  >0$ and in both cases $c = \frac{\theta}{\theta +1} T_{\mu}^{(-1)}( \theta) = \tilde{S}_{\mu}(\theta)$.
	
	\item If $\theta > T_{\mu}( \mu_l)$, then $\bar{\gamma}_l > 0$ and $c = \frac{\theta}{\theta +1} \mu_l$.
\end{itemize}
In each case, we remind that we have for $i = 1,...,l-1$: 
\[ \bar{\gamma}_i = \frac{\mu_i \alpha_i}{\theta+1 - \theta \frac{\mu_i}{c}} = \frac{c}{\theta} \frac{\alpha_i \mu_i}{\frac{\theta +1}{\theta} c - \mu_i}.\]	
and  
\[\bar{\gamma}_l = 1 - \sum_{i=1}^{l-1} \bar{\gamma}_i .\]	
	
Therefore, there can be only one possible $\bar{\gamma}$ and then substituting $\bar{\gamma}$ and $c$ in $f_{\theta}(\gamma)$ we have:
\begin{eqnarray*} \lim_{N \to \infty} \frac{1}{N} \log I_N(\theta, X_N) &=& \frac{\beta}{2} \Big[(\theta +1) \log c - \log |\theta| - \sum_{i=1}^{l-1} \alpha_i \log \Big| \frac{\theta +1}{\theta}c - \mu_i \Big|\Big]\\
	&= & \frac{\beta}{2} J(\theta, \lambda,\mu)  \end{eqnarray*}
 where $J$ is indeed the function defined in Theorem \ref{maintheo}.

 For $\theta \leq 0$, we have the same analysis by replacing $\mu_l$ by $\mu_1$ (and thus assuming that $\alpha_i > 0$ for all $i >1$). We end up then with the following values for $c$: 
 \begin{itemize}
 	\item If $\theta \geq T_{\mu}( \mu_1)$, then $c = \frac{\theta}{\theta +1} T_{\mu}^{(-1)}( \theta) = \tilde{S}_{\mu}(\theta)$ (with the continuous extension in $\theta = -1$, $\tilde{S}_{\mu}(-1)= \Big( \int \frac{d\mu(x)}{x} \Big)^{-1}$).
 	
 	\item If $\theta < T_{\mu}( \mu_1)$, then $c = \frac{\theta}{\theta +1} \mu_1$.
 	Then again, substituting $\hat{\gamma}$ and $c$, we obtain again the function $J$.  
 	\end{itemize}
 \end{proof}
 \subsection{Generalization to a generic limit $\mu$}\label{rank1cont}
 Now we want to generalize our result to matrices with a general limit measure $\mu$. First we will need that the function $J$ is continuous in the argument $(\theta,\lambda_,\mu)$. 
 
 \begin{lemma} \label{cont}
 	Let $\kappa > 0$. The function $J$ is continuous on the sets: 
 	\[\mathcal{D}^+ : = \{ (\theta,\lambda,\mu) \in \R^+ \times \R^{+,*} \times \mathcal{P}( \R^+), \lambda \geq r(\mu), l(\mu) > 0 \} \]
 	
 	and on the set 
 	\[\mathcal{D}^- : = \{ (\theta,\lambda,\mu) \in \R^- \times \R^{+,*} \times \mathcal{P}( \R^+), \lambda \leq l(\mu), r(\mu) < \kappa \} \]

 	where $\mathcal{D}$ is endowed with the topology induced by the product topology where we consider the weak topology on $\mathcal{P}(\R^+)$.
 	\end{lemma}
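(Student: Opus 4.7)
The approach is to exploit an integral representation of $J$ that extends the Remark just above the lemma. Differentiating the explicit piecewise formula for $J$ with respect to $\theta$ in each branch (Case A: $|\theta| \ge |T_\mu(\lambda)|$ with matching signs, where $c = \tfrac{\theta \lambda}{\theta+1}$; Case B: $|\theta| \le |T_\mu(\lambda)|$, where $c = \tilde S_\mu(\theta)$) yields $\partial_\theta J = \log \tfrac{\theta\lambda}{\theta+1}$ in Case A and $\partial_\theta J = \log \tilde S_\mu(\theta)$ in Case B; these coincide at the transition $\theta = T_\mu(\lambda)$, using the defining relation $y G_\mu(y) = \theta+1$ at $y = T_\mu^{-1}(\theta)$ to cancel the $\partial_\theta y$ contributions. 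Combined with $J(0,\lambda,\mu) = 0$, this yields on $\mathcal{D}^+$
\[ J(\theta, \lambda, \mu) = \int_0^\theta \log \phi(t, \lambda, \mu) \, dt, \qquad \phi(t, \lambda, \mu) := \max\bigl( \tilde S_\mu(t), \tfrac{t \lambda}{t+1} \bigr), \]
valid for all $\theta \ge 0$, and the analogous formula on $\mathcal{D}^-$ with a $\min$ in place of the $\max$ and integration from $\theta$ to $0$. Continuity at the extension points $\theta = 0$ and (on $\mathcal{D}^-$) $\theta = -1$ becomes automatic.

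The key step is joint continuity of $\phi(t, \cdot, \cdot)$ for each fixed $t$, from which continuity of $J$ follows by dominated convergence, using that along any convergent sequence $(\lambda_n, \mu_n)$ the bounds $\tfrac{t}{t+1} l(\mu_n) \le \phi \le \lambda_n$ (since $\tilde S_{\mu_n}(t) \le r(\mu_n) \le \lambda_n$ and $l(\mu_n) \ge l(\mu)/2$ eventually, via weak convergence and $l(\mu)>0$) control $|\log \phi|$ uniformly on a neighborhood of $[0, \theta]$. The term $\tfrac{t\lambda}{t+1}$ is manifestly continuous. The domain constraints of $\mathcal{D}^+$ confine the supports of $\mu_n$ to a common compact $[\epsilon, K] \subset (0,\infty)$ eventually, so $G_{\mu_n}(z) \to G_\mu(z)$ locally uniformly for $z$ strictly outside $[\epsilon, K]$ (weak convergence applied to the bounded continuous test function $w \mapsto (z-w)^{-1}$). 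Inverting then gives $\tilde S_{\mu_n}(t) \to \tilde S_\mu(t)$ whenever $T_\mu^{-1}(t) > r(\mu)$, i.e.\ whenever $t$ lies strictly in the interior of Case B.

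The main obstacle is the boundary regime $t \ge T_\mu(\lambda)$, where $T_\mu^{-1}(t)$ collapses to $r(\mu)$ and $\tilde S_{\mu_n}(t)$ may fail to converge to $\tilde S_\mu(t)$ (for instance if a small amount of mass of $\mu_n$ drifts above $r(\mu)$, making $r(\mu_n)$ strictly larger than $r(\mu)$). Here the max structure of $\phi$ nevertheless forces the correct limit: if $z_n := T_{\mu_n}^{-1}(t)$ has a finite accumulation point $z^*$ along a subsequence with $t \le T_{\mu_n}(\lambda_n)$ (hence $z_n \ge \lambda_n \to \lambda$), then on the good event $z^* > r(\mu)$ one can pass to the limit in $T_{\mu_n}(z_n) = t$ using the previous paragraph and obtain $T_\mu(z^*) = t$; combined with $t \ge T_\mu(\lambda)$ and monotonicity of $T_\mu^{-1}$ this forces $z^* = \lambda$ and $t = T_\mu(\lambda)$, preventing $\tilde S_{\mu_n}(t)$ from drifting strictly above $\tfrac{t\lambda}{t+1}$. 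Together with the trivial lower bound $\phi(t, \lambda_n, \mu_n) \ge \tfrac{t\lambda_n}{t+1} \to \tfrac{t\lambda}{t+1}$, a case split on whether $t \le T_{\mu_n}(\lambda_n)$ holds along the sequence yields $\phi(t, \lambda_n, \mu_n) \to \phi(t, \lambda, \mu)$ in every configuration. The proof on $\mathcal{D}^-$ is symmetric, swapping $r(\mu)$ for $l(\mu)$, $\max$ for $\min$, with the subrange $t \in (-1, 0)$ (where only the $\tilde S_\mu$ branch is defined) handled by the continuous extension of $\tilde S_\mu$ at $t = -1$, available since $l(\mu) > 0$.
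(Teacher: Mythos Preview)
Your approach differs from the paper's. The paper works directly with the closed formula for $J$: it first computes $\partial J/\partial\lambda$ and $\partial J/\partial\theta = \log c$ to obtain uniform Lipschitz moduli in $(\theta,\lambda)$ on compacts, then uses the $\lambda$--modulus to retreat to a subdomain with $\lambda \ge e^\epsilon r(\mu)$, and finally proves continuity in $\mu$ alone there by showing $d_n := \tfrac{\theta+1}{\theta}c_n \to d$ and passing to the limit in $\int\log|d_n-x|\,d\mu_n(x)$ (the gap $e^\epsilon$ keeps $d_n$ uniformly away from $\operatorname{supp}\mu_n$). You instead integrate the same identity $\partial J/\partial\theta = \log c$ to obtain $J=\int_0^\theta \log c(t,\lambda,\mu)\,dt$ and then argue pointwise convergence of $c(t,\lambda_n,\mu_n)$ plus dominated convergence. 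Your route sidesteps the $\int\log|d-x|\,d\mu$ term entirely and makes the extensions at $\theta=0,-1$ automatic; the cost is having to control $c(t,\cdot,\cdot)$ for \emph{every} $t\in[0,\theta]$, including across the transition $t=T_\mu(\lambda)$, and to manage the domain of $\tilde S_{\mu_n}$.

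One concrete claim is false and should be repaired: weak convergence does \emph{not} give $l(\mu_n)\ge l(\mu)/2$ eventually (take $\mu_n=(1-\tfrac1n)\mu+\tfrac1n\delta_{1/n}$, which stays in $\mathcal D^+$ yet has $l(\mu_n)=1/n\to 0$). Fortunately you do not need it: since $\phi(t,\lambda_n,\mu_n)\ge \tfrac{t\lambda_n}{t+1}$ in both branches and $\lambda_n\to\lambda>0$, the bound $|\log\phi|\le |\log\tfrac{t}{t+1}|+C$ already furnishes an integrable majorant on $[0,\theta]$. Two further points deserve tightening. First, the $\max$ formula tacitly evaluates $\tilde S_{\mu_n}(t)$ beyond its natural domain $(0,T_{\mu_n}(r(\mu_n))]$; either declare $T_{\mu_n}^{-1}(t):=r(\mu_n)$ there, or simply define $\phi$ piecewise as $c$. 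Second, the boundary argument is slightly garbled (``$T_\mu^{-1}(t)$ collapses to $r(\mu)$'' is inaccurate, and you cannot conclude $t=T_\mu(\lambda)$ when $t>T_\mu(\lambda)$ strictly). The clean version is: along any subsequence in Case~B for $(\lambda_n,\mu_n)$ one has $z_n:=T_{\mu_n}^{-1}(t)\ge\lambda_n$; if an accumulation point $z^*$ satisfied $z^*>\lambda$, then $z_n$ is eventually bounded away from $\operatorname{supp}\mu_n\subset[0,\lambda_n]$, so $T_{\mu_n}(z_n)\to T_\mu(z^*)$, forcing $T_\mu(z^*)=t>T_\mu(\lambda)$ and hence $z^*<\lambda$, a contradiction; thus $z_n\to\lambda$ and $\tilde S_{\mu_n}(t)\to\tfrac{t\lambda}{t+1}$ as required.
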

\begin{proof}
Let $ d = \frac{\theta +1}{\theta} c$ so that 
\[ J(\theta, \lambda, \mu) = (\theta +1) \ln d +  \theta \ln |\theta| - (\theta+1) \ln (|\theta +1|) - \int \ln | d - x | d \mu(x) .\]
Let us first prove the continuity on $\mathcal{D}^+$. Differentiating in $\lambda$, we get that 
\[ \frac{\partial J }{\partial \lambda} = \begin{cases} \frac{\theta + 1}{\lambda} - G_{\mu}(\lambda) \text{ if } \lambda \geq T_{\mu}^{-1}( \theta) \\
0 \text{ if not } \end{cases} .\]
This implies that since $G_\mu (\lambda) \geq \frac{1}{\lambda}$. 

\[ 0 \leq \frac{\partial J }{\partial \lambda}(\theta, \lambda, \mu) \leq \frac{\theta}{\lambda} .\]

so that on $\mathcal{D}^+$:

\[ |J(\theta,\lambda,\mu) - J(\theta,\lambda',\mu)| \leq \theta | \ln \lambda - \ln \lambda' |. \]

The same way, differentiating in $\theta$ we have:
\[ \frac{\partial J}{\partial \theta}(\theta, \lambda, \mu) = \ln c. \]
For $\theta \leq T_{\mu}(\lambda)$, since $\tilde{S}$ is increasing, we have that $ c = \tilde{S}(\theta) \leq \lambda$. If not, since $\theta > 0$, $c = \frac{\theta}{\theta +1 } \lambda \leq \lambda$. Therefore: 
\[ 0 \leq \frac{\partial J}{\partial \theta} ( \theta, \lambda,\mu) \leq \ln \lambda .\] 

And then we use that $c \leq \lambda$ to have that 

\[ |J(\theta, \lambda, \mu) - J(\theta', \lambda, \mu)| \leq \ln \lambda |\theta - \theta'| .\]

Therefore, we have a continuity modulus uniform on every compact on $\mathcal{D}^{+}$ for the variables $(\theta, \lambda)$ in the sense that for each such compact $K \subset \mathcal{D}^+$, the exists some increasing function $g_{K} : \R^{+} \to \R^+$ that tends to $0$ in $0$ such that if $(\theta, \lambda,\mu),(\theta', \lambda',\mu) \in K$ such that $| \lambda - \lambda'|+ |\theta - \theta'| \leq \epsilon$ then 
\[ |J((\theta, \lambda,\mu)) - J((\theta', \lambda',\mu))| \leq g_K(\epsilon) .\]   `Therefore it suffices to prove continuity with respect to $\mu$. Using the continuity modulus with respect to $\lambda$, it suffices to prove the continuity on the domains:

	\[\mathcal{D}_{M,A, \epsilon}^+ : = \{ (\theta,\lambda,\mu) \in [0, A] \times [M^{-1}, M ] \times \mathcal{P}( \R^+),  \lambda \geq e^{\epsilon} r(\mu), 0 \leq l(\mu) \} \]
	for $M >0, \epsilon > 0, A >0$. If $((\theta_n,\lambda_n, \mu_n))_{n \in \N}$ is a sequence of elements of $\mathcal{D}_{M,A, \epsilon}^+$ converging toward converging toward some $(\theta, \lambda, \mu) \in \mathcal{D}_{M,A, \epsilon}^+$, we see that for $n$ large enough $(\theta, \lambda, \mu_n) \in \mathcal{D}_{M+1,A+1, \epsilon/2}^+ $ and using the uniform continuity modulus on $\mathcal{D}_{M+1,A+1, \epsilon/2}^+$, we see that it is sufficient to show that $J(\theta, \lambda, \mu_n)$ converges toward $J(\theta, \lambda, \mu)$.

	 If $\theta = 0$, the result is trivial, so we can assume that $\theta > 0$. Then let $d_n = \frac{\theta +1}{\theta}  c_n$, where $c_n$ is defined the same as the quantity $c$ for $\lambda_, \theta, \mu_n$ instead of $\lambda, \theta, \mu$. First we have $\lim_{n \to \infty} d_n =d$. Indeed, if $\theta > T_{\mu}(\lambda)$, using the fact that $\lambda \geq e^{\epsilon/2} r(\mu_n)$ for $n$ large enough  and the weak convergence of $\mu_n$, we have that $\lim_{n \to \infty} T_{\mu_n}(\lambda) = T_{\mu}(\lambda)$, and therefore $\theta > T_{\mu_n}(\lambda)$ and  $d_n = \lambda$ for $n$ large enough. If $\theta < T_{\mu}(\lambda)$, the same argument leads to $\theta <  T_{\mu_n}(\lambda)$ for $n$ large enough. In that case, for every $\eta >0$, we can find $\theta^+$ and $\theta^-$ such that $\theta^- < \theta < \theta^+$ and $T_{\mu}^{-1}(\theta^-) \geq T_{\mu}^{-1}(\theta) - \eta> e^{\epsilon/2} r(\mu)$ and $T_{\mu}^{-1}(\theta^+) \leq T_{\mu}^{-1}(\theta) + \eta$. If we denote $d^{\pm} = T_{\mu}^{-1}(\theta^{\pm})$, using the weak convergence of $\mu_n$ and the fact that $d^- \geq e^{\epsilon/2} r(\mu_n)$ for $n$ large enough, we have that $T_{\mu_n}(d^- ) \geq \theta \geq T_{\mu_n}(d^+) $ and therefore $d^- \leq d_n \leq d^+ $ for $n$ large enough which proves $\lim_{n \to \infty} d_n = d$. The case $T_{\mu}(\lambda)  = \theta$ can be easily deduced by uniform continuity in $\theta$. To conclude then, we only need to prove the convergence of the term $\int \log | d_n - x| d \mu_{n}$. One can see that $d_n \geq \lambda > e^{\epsilon/2} r(\mu_n)$. Therefore we have $d_n \geq e^{\epsilon/2} r(\mu)$. The sequence of function $x \mapsto \log | d_n -x |$ converges uniformly on $]0, e^{\epsilon/3} r(\mu)]$ and one concludes with the weak convergence of $\mu_n$. The proof is the same for $\mathcal{D}^-$ except one needs the assumption $r(\mu_n)\leq \kappa$ to prove the uniform convergence of the $x \mapsto \log |d_n - x|$ on some $[e^{- \epsilon/3} l(\mu), \kappa]$ and that one needs also to consider the case $\theta = -1$ apart from the generic case.

 \end{proof}

To proceed from the case with $\mu$ with a finite support to the case of a diffuse support, we first consider the case with $\mu$ with a continuous repartition function. We will assume that $\theta \geq 0$ (the case $\theta \leq 0$ is similar). Let us take $(X_N)_{N\in \N}$ a sequence of matrices such that their eigenvalue distributions converge toward $\mu$, that $\lambda_N(X_N)$ converges toward some $\lambda_1$ and that $||X_N||, ||X_N^{-1}||\leq M$ for some $M$. For $\epsilon > 0$ and let $X_N^{(\epsilon)}$ be the modification of $X_N$ where we replace its $i$-th eigenvalue $\lambda_i(X_N)$ by  $\lambda_i^{(\epsilon)}(X_N) = (1+ \epsilon)^{n_0}$ where $n_0$ is the smallest integer such that $\lambda_i(X_N) \leq (1+ \epsilon)^{n_0}$. Then the eigenvalue distribution of $X_N^{\epsilon}$ converges toward $\mu^{(\epsilon)} = \sum_{k\in \N^*} \alpha_k \delta_{(1+ \epsilon)^k}$ where $\alpha_k = \mu( [(1+ \epsilon)^{k-1}, (1+ \epsilon)^{k}])$ thanks to the continuity hypothesis on the repartition function of $\mu$. Since $\lambda_N(X_N)$ converges toward $\lambda_1$,  $\lambda_N(X_N^{(\epsilon)})$ is constant for $N$ large enough and equal to $\lambda_1^{(\epsilon)}$ (defined the same way as $\lambda_i(X_N^{(\epsilon)})$). Then $X_N^{(\epsilon)}$ satisfies the conditions of section \ref{rank1dis} and Lemma \ref{lemrank1} applies. Therefore: 

	\[ \lim_{N \to \infty} \frac{1}{N} \log I_N(X^{(\epsilon)}_N, \theta) = \frac{\beta}{2}J(\theta, \lambda_1^{(\epsilon)}, \mu^{(\epsilon)}). \]

With our hypothesis on $\mu$, $\lim_{\epsilon \to 0}\mu^{(\epsilon)} = \mu$ and $\lim_{\epsilon \to 0} \lambda_1^{(\epsilon)} = \lambda_1$. Furthermore, using Remark \ref{equicont}, we can see that:
\[ \frac{1}{N} | \log I_N(\theta,X_N) -  \log I_N(\theta,X^{(\epsilon)}_N) | \leq \frac{\beta}{2} \theta \log(1 + \epsilon) .\]
Using the continuity of the limit $J$, one concludes Theorem \ref{maintheo} stands in this case.

To relax the condition on the repartition function of  $\mu$ we can do the same thing by replacing $\lambda_i(X_N)$ by $\lambda_i ^{(\epsilon)}(X_N)= \lambda_i(X_N)(1 + \epsilon \delta_i)$ where $\delta_i \in [0,1]$ and are sampled so that the eigenvalue distribution of $X_N^{(\epsilon)}$ converges towards $\mu^{(\epsilon)}$ which is the independent product of $\mu$ with a uniform law on the segment $[1, 1 + \epsilon]$. This law has a continuous repartition function and we can again apply the approximation arguments above. 

\section{The case of $k \geq 2$}\label{rank2}

To prove the result in the case $k \geq 2$, we will proceed recursively on $k$ by conditioning on the first column of $U$. This will bring us back the case $k-1$. As in the one-dimensional case, we will first proceed by dealing with the case where the eigenvalues of $X_N$ are in a finite set in subsection \ref{rank2dis} and then we will extend the result to general $\mu$ in the same fashion as in the one-dimensional case in subsection \ref{rank2cont}.  

Therefore, for the rest of this section, we let $k > 1$ be an integer and we assume that the Theorem \ref{maintheo} is satisfied for $k-1$. 

\subsection{Spectra included in a finite set}\label{rank2dis}

Let $\bar{\theta} \in \R^{l}$,  $\sigma$ a permutation of $[2, k-1]$ and $m \in [1, k-1]$ such that $\theta_{\sigma(1)} \leq \theta_{\sigma(2)}\leq ... \theta_{\sigma(m)} \leq 0 \leq  \theta_{\sigma(m+1)} \leq ... \leq \theta_{\sigma(k-1)}$. Let $p \in \N$ such that $\theta_{\sigma(p)} \leq \theta_1 \leq \theta_{\sigma(p+1)}$. We will first assume that $\theta_1 \geq 0$ which implies that we can choose $p \geq m$. At the end of the proof, we will discuss the differences that arise when $\theta_1 \leq 0$

We will assume that there is a finite number of reals $\mu_{-m} < \mu_{- m+1} < ... < \mu_{l + k-m}$ such that:
\[ X_N= diag(\mu_{-m},...,\mu_{-1},  \underbrace{\mu_0,...,\mu_0}_{\alpha_0(N) \text{ times }},...,\underbrace{\mu_l,...,\mu_l}_{\alpha_l(N) \text{ times }}, \mu_{l+1},...,\mu_{l+k-m} ) \]
 and where $\lim \alpha_i(N)/N  = \alpha_i \geq 0$. 

Therefore, given these notation we are trying to establish is the following lemma:
\begin{lemma}

\begin{align*}
\lim_{N \to \infty} \frac{1}{N} \log I_N(\overline{\theta},X_N)& =  \frac{\beta}{2}J(\theta_1, \mu_{l+ p-m+1}, \mu)
+ \frac{\beta}{2}\sum_{i=1}^{m} J(\theta_{\sigma(i)}, \mu_{i-m-1}, \mu) \\
&+ \frac{\beta}{2}\sum_{i= m+1}^{p } J(\theta_{\sigma(i)},\mu_{l+i-m},\mu) + \frac{\beta}{2} \sum_{i=p+1}^{k-1} J(\theta_{\sigma(i)}, \mu_{i - m +l+1}, \mu) \end{align*}

\end{lemma}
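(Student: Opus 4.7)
The plan is to proceed by recursion on $k$ by conditioning on the first column $e$ of $U \in \mathcal{U}_N^{\beta}$. Completing $e$ into an orthonormal basis $(e,v_2,\ldots,v_N)$ and factoring $U=(e,v_2,\ldots,v_N)\cdot\mathrm{diag}(1,W)$ with $W$ Haar-distributed on $\mathcal{U}_{N-1}^\beta$, a block Schur-complement computation yields the identity
\[
\Delta_{\bar\theta}(U^*X_NU) \;=\; a^{\theta_1}\,\Delta_{\tilde\theta}(W^*ZW), \qquad a := \langle e,X_Ne\rangle,\ \tilde\theta:=(\theta_2,\ldots,\theta_k),
\]
where $Z$ is the Schur complement of $X_N$ with respect to the direction $e$, viewed as a positive definite matrix on $e^\perp$. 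Integrating over $W$ and using $\Delta_{\tilde\theta}^{\beta N/2} = \Delta_{N\tilde\theta/(N-1)}^{\beta(N-1)/2}$, we obtain
\[
I_N(\bar\theta,X_N) \;=\; \int_{\mathbb{S}^{\beta N-1}} a^{\theta_1\beta N/2}\,I_{N-1}\!\left(\tfrac{N}{N-1}\tilde\theta,\,Z\right)de.
\]

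The second step is a Dirichlet large-deviation analysis. Parametrise $e$ through the weights $\gamma_i^N$ equal to the squared norm of the projection of $e$ onto the $\mu_i$-eigenspace of $X_N$. The vector $\gamma^N$ is Dirichlet, and as in the proof of Lemma \ref{lemrank1} it satisfies on the simplex an LDP with rate $\frac{\beta}{2} H(\gamma) = -\frac{\beta}{2}\sum_{i=0}^l \alpha_i \log(\gamma_i/\alpha_i)$; crucially, only the bulk multiplicities contribute explicitly, so putting mass on a simple extreme eigenvalue is not penalised by $H$ beyond its effect on the bulk weights. Cauchy interlacing for Schur complements shows that, for typical $e$ with weight $\gamma_j$ on the $j$-th extreme, the simple eigenvalue $\mu_j$ survives as an extreme of $Z$ if and only if $\gamma_j = 0$, while the empirical measure of $Z$ always converges weakly to $\mu$. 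Combining the recursion hypothesis on $I_{N-1}(\frac{N}{N-1}\tilde\theta, Z)$ with Varadhan's lemma, one obtains the variational expression
\[
\lim_{N\to\infty}\frac1N\log I_N(\bar\theta,X_N) = \frac{\beta}{2}\sup_\gamma\!\left[\theta_1\log\!\Bigl(\sum_i \mu_i\gamma_i\Bigr) + \sum_{i=1}^{k-1} J\bigl(\theta_{\sigma(i)},\lambda_i^Z(\gamma),\mu\bigr) - H(\gamma)\right],
\]
where the extremes $\lambda_i^Z(\gamma)$ pair with $\theta_{\sigma(i)}$ as dictated by Theorem \ref{maintheo} at level $k-1$.

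The main obstacle lies in solving this optimisation, and here the non-symmetry of $\Delta_{\bar\theta}$ in its arguments (contrary to the HCIZ setting of \cite{GuiHus}) forces us to identify the pairing of $\theta_1$ with a specific eigenvalue through the critical-point equations rather than by a permutation. Since $\theta_1 \geq 0$, the supremum should be realised by concentrating $\gamma$ on one of the large simple extremes $\mu_{l+1},\ldots,\mu_{l+k-m}$; writing the Lagrange conditions exactly as in Lemma \ref{lemrank1} produces, for the supported index $j_0$, either the regular regime $c = \tilde S_\mu(\theta_1)$ when $\theta_1 \leq T_\mu(\mu_{l+j_0})$, or the stuck regime $c = \frac{\theta_1}{\theta_1+1}\mu_{l+j_0}$ otherwise. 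A rearrangement argument---using that, by the strict monotonicity of $\tilde S_\mu$, larger $\theta$'s prefer larger eigenvalues in the $J$-term, together with the constraint that the remaining extremes of $Z$ must still be paired according to the $(k-1)$-dimensional statement---shows that the unique optimum occurs at $j_0 = p-m+1$, which is exactly the position $\theta_1$ would take in the global ordering of the positive $\theta_i$'s. Substituting this $j_0$ and the critical values of $c$ and $\gamma$ back into the objective reproduces the sum of $J$ terms in the statement. The case $\theta_1 \leq 0$ is treated symmetrically, the optimisation concentrating on the small extremes $\mu_{-m},\ldots,\mu_{-1}$.
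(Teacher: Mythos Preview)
Your conditioning on the first column, the Schur-complement identity, and the Dirichlet/Varadhan reduction to a variational problem all match the paper. The gap is entirely in how you solve the resulting optimisation. The objective is
\[
\theta_1\log\Bigl(\sum_i\mu_i\gamma_i\Bigr)+\sum_{i=1}^{k-1}J\bigl(\theta_{\sigma(i)},\chi_i(\gamma),\mu\bigr)+\sum_{i=0}^{l}\alpha_i\log\frac{\gamma_i}{\alpha_i},
\]
and the difficulty is that the extremes $\chi_i$ of $Z$ are \emph{coupled} to all coordinates of $\gamma$ through the interlacing equations $\sum_j\gamma_j\mu_j/(\chi_i-\mu_j)=0$. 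Your claim that one can ``write the Lagrange conditions exactly as in Lemma~\ref{lemrank1}'' ignores the extra terms $\partial_\gamma J(\theta_{\sigma(i)},\chi_i(\gamma),\mu)$, which do not vanish and have no simple form. Likewise, the assertion that the supremum ``concentrates on one of the large simple extremes $\mu_{l+j_0}$'' and that a rearrangement argument then pins down $j_0=p-m+1$ is not substantiated: at the actual optimum the $\gamma_i$ on the bulk indices are all positive, the $\chi_i$ sit at interior points of their interlacing intervals (not at endpoints), and the identification of $\theta_1$ with $\mu_{l+p-m+1}$ emerges only after a non-obvious telescoping cancellation, not from a direct choice of support.

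What the paper does instead is introduce an explicit rational change of variables $\gamma\mapsto\bar\gamma$, $\bar\gamma_i=F(\mu_i^{-1})\gamma_i$ for a carefully chosen $F$, which sends the constrained domain (simplex plus $k-1$ interlacing equations) to a \emph{product} domain: an $(l+2)$-simplex for $\bar\gamma$, times independent intervals for each $\chi_i$. This decoupling is the heart of the argument. After it, the $\bar\gamma$-optimisation is literally the one-dimensional problem of Lemma~\ref{lemrank1} and yields $J(\theta_1,\mu_{l+1},\mu)$, while each $\chi_i$ is optimised separately in a one-variable function $H_i$. A case analysis on the sign of $H_i'$ (depending on whether $\theta_{\sigma(i)}\gtrless\theta_1$ and on the position of $T_\mu^{-1}(\theta_1)$ relative to the interval) then produces, for $q<i\le p$, correction terms of the form $J(\theta_1,\mu_{l+i-m+1},\mu)-J(\theta_1,\mu_{l+i-m},\mu)$ that telescope and promote $J(\theta_1,\mu_{l+1},\mu)$ to $J(\theta_1,\mu_{l+p-m+1},\mu)$. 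Without the decoupling step and this telescoping computation, your proposal does not establish the claimed pairing.
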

\begin{proof}
First, we can write the matrix $U^* X_N U$ by blocks as follows 
\[ U^* X_N U = \begin{pmatrix} a & c^* \\ c & \tilde{X}_{N -1} \end{pmatrix} \]

where $a \in \R^{+,*}$, $c \in \C^{N-1}$ and $\tilde{X}_{N -1} \in \mathcal{H}^{\beta, ++}_{N-1}$. Then one can see that: 

\[ T^*_{N} U^* X_N U T_N = \begin{pmatrix} a & 0 \\ 0 & \tilde{X}_{N -1} - \frac{c c^*}{a} \end{pmatrix} \]

where 
\[ T_N = \begin{pmatrix} 1 & - \frac{c^*}{a} \\ 0 & I_{N-1} \end{pmatrix} .\]
 Due to the the fact that $T_N$ is upper triangular, this implies that for $i \in [1,N]$: 
 
 \[ [ T^*_N ]_{i} [U^* X_N U]_{i}[ T_N ]_{i} = \begin{pmatrix} a & 0 \\
 0 & [ \tilde{X}_{N -1} - \frac{c c^*}{a}]_{i-1} \end{pmatrix}\]
 
 If we denote $Y_N= \tilde{X}_{N -1} - \frac{c c^*}{a}$, using that $\det[T_N]_i =1$, we have that $\det([U^* X_N U]_{i}) = a \det(Y_N)$ and we can express $\Delta_{\bar{\theta}}(X_N)$ as a function of $a$ and $Y_N$.
 \[ \Delta_{\bar{\theta}}(X_N) = a^{\theta_1 - \theta_2} \prod_{i=2}^k \det[ U^* X_N U ]_{i}^{\theta_{i} - \theta_{i-1}} = a^{\theta_1 - \theta_2} \prod_{i=2}^k a^{\theta_{i} - \theta_{i+1}} \det[ Y_N]_{i}^{\theta_{i} - \theta_{i+1}} = a^{\theta_1} \Delta_{\tilde{\theta}}(Y_N) \]
 where $\tilde{\theta} = ( \theta_2, \theta_3, ....,\theta_k) \in \R^{k -1}$. Furthermore, one can easily see that the spectrum of $Y_N$ only depends on the first column of $U$ and that once conditioned on this that column  that we denote $e$, the law of $Y_N$ is invariant by conjugation by any element of $\mathcal{U}^{\beta}_{N-1}$. Therefore, since $a = \langle e, X_N e \rangle$ we deduce that:
 
 \begin{eqnarray}I_N(\bar{\theta}, X_N) & =& \int_{U \in \mathcal{U}_N^{(\beta)}} \langle e , X_N e\rangle ^{\frac{\beta N }{2}\theta_1} \Delta_{\tilde{\theta}}(Y_N)^{\frac{\beta N}{2}} dU \nonumber \\
 & =& \int_{e \in \Ss^{\beta N -1}} \langle e , X_N e\rangle ^{\frac{\beta N }{2}\theta_1} I_{N-1}\Big(\frac{N}{N-1}\tilde{\theta},Y_N\Big) de \label{cond}  .\end{eqnarray} 

Let us first determine the spectrum of $Y_N$ in function of $e$ and its components. Let us denote $P$ the characteristic polynomial of $X_N$. Using the linearity in the first column, we deduce: 

\[ P(z) = z \chi_{\tilde{X}_N} + \det\begin{pmatrix}  - a & - c^* \\  - c  & z - \tilde{X}_{N -1} \end{pmatrix} = z \chi_{\tilde{X}_N} - a \chi_{Y_N}(z) \]
where we used the Schur complement formula. 
In the same way we defined the $I^N_i$ and the $\gamma_i^N$ in subsection \ref{rank1dis}, we can also do it here: 

\[ \forall i= - m,...,k+l -m \, , I_i^N := \{ j \in [1,N], \lambda_{j}(X_N) = \mu_i \} \text{ and } \gamma_i^N = \sum_{j \in I_{i} } |e_j|^2 .\]
Then using Weyl interlacing relationships, we know that:

\[ \frac{\chi_{\tilde{X}_N}(z)}{P(z)} = \sum_{i=-m}^{l+k-m} \frac{\gamma^N_i}{z - \mu_i}
\]
therefore we have that: 

\[ \frac{\chi_{Y_N}(z)}{P(z)} =  \sum_{i=-m}^{l+k-m} \frac{\gamma^N_i \mu_i}{z - \mu_i}
.\]
Therefore, if we consider $Q$ the polynomial: 

\begin{equation} \label{roots}
	Q(z) = \sum_{i=-m}^{l+k-m} \gamma^N_i \mu_i \prod_{j \neq i} (z - \mu_i) 
 \end{equation}
and if we call $\chi_{-m} \leq ... \leq \chi_{-1} \leq \mu' _1\leq ... \mu'_l \leq \chi_1 \leq ... \leq \chi_{k-m-1}$. its roots, then the spectrum 
of $Y_N$ has the following form: 
\begin{equation}\label{spec1} Spec(Y_N) = \{ \chi_{-m},...,\chi_{-1},\underbrace{\mu_0,...,\mu_0}_{\alpha_0(N) -1}, \mu'_0 , \underbrace{\mu_1,...,\mu_1}_{\alpha_1(N) -1},....,\mu'_{l-1}, \underbrace{\mu_l,...,\mu_l}_{\alpha_l(N) -1}, \mu'_{l}, \chi_{1}, ...,\chi_{k-m-1} \} \end{equation} 

and the roots of $Q$ satisfy:

\begin{multline}\label{spec2} \mu_{-m} \leq \chi_{-m} \leq \mu_{-m+1} \leq ... \leq \mu_1 \leq \mu'_1 \leq ... \leq \mu_{l-1} \leq \mu'_{l-1}  \\ \leq \mu_l \leq \mu'_l \leq \mu_{l+1} \leq \chi_1 \leq \mu_{l+2} \leq ... \leq \chi_{k-m -1}\leq \mu_{k-m}. \end{multline}
In particular the spectrum of $Y_N$ is also interlaced with the spectrum of $X_N$. Therefore the empirical eigenvalue distribution of $Y_N$ also converges towards $\mu$. Furthermore, the $\chi_i$ and $\mu_i$ are continuous functions of the $\gamma^N_i$ (since they are roots of a polynomial whose coefficients are continuous in the $\gamma_i^N$) and if the $\gamma^N_i$ are all positive, they are respectively the $m$ smallest and $k -m -1$ largest solutions of the following equation in $z$.
\begin{equation}\label{interl} \sum_{i=-m}^{k+l-m} \frac{\gamma^N_i \mu_i}{z - \mu_i} = 0 \end{equation}

Given the form of the spectrum of $Y_N$ one can see $I_N(\tilde{\theta},Y_N)$ as a function of the $\mu'_i$ and $\chi_i$ and therefore as a function of of the $\gamma_i^N$.

 Then, using the recursion hypothesis, that is that Theorem \ref{maintheo} is satisfied for $k-1$, Remark \ref{equicont} that provides the equicontinuity of $I(\tilde{\theta},Y_N)$ in the $\mu'_i$ and $\chi_i$, and the continuity of the function $J$ in its first two arguments, one can prove the following lemma: 
 
 \begin{lemma}
 	Let $\mathcal{E}_N$ be the subset of matrices in $\mathcal{H}_{N-1}^{\beta, ++}$ whose spectrum is of the form described by the inequalities \eqref{spec1} with the $\mu'_i$ and $\chi_i$ satisfying the equation \eqref{spec2}. Then we have: 
 	\[ \lim_{N \to \infty} \sup_{Y \in \mathcal{E}_N} \Big| \frac{1}{N} \log I_{N-1}(\frac{N}{N-1} \tilde{\theta}, Y) - \sum_{i=1}^{m}J(\theta_{\sigma(i)}, \chi_ { i - m+1}, \mu) - \sum_{i= m+1}^{k-1} J(\theta_{\sigma(i)}, \chi_{i - m}, \mu) \Big| = 0 .\] 
 	\end{lemma}
If we look at the law of the $\gamma_i^N$, we have again that they follow a Dirichlet law with parameters: 

\[ (\underbrace{\frac{\beta}{2},...,\frac{\beta}{2}}_{ m \text { times }}, \frac{\beta \alpha_0(N)}{2},..., \frac{\beta \alpha_l(N)}{2}, \underbrace{\frac{\beta}{2},...,\frac{\beta}{2}}_{ k - m \text { times }}) \]
and therefore it follows a large deviation principle on the simplex $K$ with the rate function: 
\[ \gamma \mapsto \frac{\beta}{2}\sum_{i=0}^l \alpha_i \log \frac{\gamma_i}{\alpha_i} .\]
(where the indexation for the coordinates of $\gamma$ begins at $-m$ and ends at $k-m+l$ as for the $\gamma_i^N$). 
Therefore, using this large deviation principle in conjunction with the preceding lemma and with the equation \eqref{cond}, we can use Varadhan's lemma on $I_N(\bar{\theta}, X_N)$ so the limit of $N^{-1} \log I_N(\bar{\theta}, X_N)$ exists and is equal to the following supremum that we will denote $J(\bar{\theta},\{\mu_i\}, \{\alpha_i \})$: 

\[J(\bar{\theta},\{\mu_i\}, \{\alpha_i \}) = \frac{\beta}{2}\sup_{\gamma_i \geq 0, \atop \sum \gamma_i =1} \Big[\theta_1 \log \Big( \sum_{i= - m}^{l + k - m} \mu_i \gamma_i\Big) + \sum_{i=1}^{m}J(\theta_{\sigma(i)}, \chi_ { i - m+1}, \mu) + \sum_{i= m+1}^{k-1} J(\theta_{\sigma(i)}, \chi_{i - m}, \mu) + \sum_{i= 0}^{l} \alpha_i \ln \frac{\gamma_i}{\alpha_i} \Big]\]

where the $\chi_i$ are viewed as continuous functions of the $\gamma_i$ (that is as the $m$ smallest and $k-m-1$ largest roots of the polynomial $Q$ defined in equation \eqref{roots} where one replace $\gamma_i^N$ by $\gamma_i$). 
By continuity, we can take the supremum on the subset of $\gamma$ such that $\gamma_i >0$ for all $i$. Then since in this case, the $\chi_i$ are each the unique solution to equation \eqref{interl} on their corresponding intervals, we can instead take the preceding supremum over the couples $(\gamma, \chi)$ over the following domain:

\begin{multline*} \mathcal{D} = \Big\{ ( \gamma, \chi) \in (\R^{+,*})^{l +k +1} \times \prod_{i= -m}^{ -1} ] \mu_{ i} , \mu_{i+1}[  \times \prod_{i= 1}^{ k-m-1} ] \mu_{ l+i} , \mu_{l+i+1}[ : \\
	 \sum_{i=-m}^{k+l-m} \gamma_i =1, \forall j \in \llbracket -m, -1 \rrbracket \cup \llbracket 1, k-m-1 \rrbracket,  \sum_{i=-m}^{k+l-m} \frac{\gamma_i \mu_i}{\chi_j - \mu_i} = 0  \Big\} \end{multline*}
 And so we have :
\[J(\bar{\theta},\{\mu_i\}, \{\alpha_i \}) = \frac{\beta}{2}\sup_{(\gamma, \chi) \in \mathcal{D}} \Big[\theta_1 \log \Big( \sum_{i= - m}^{l + k - m} \mu_i \gamma_i\Big) + \sum_{i=1}^{m}J(\theta_{\sigma(i)}, \chi_ { i - m+1}, \mu) + \sum_{i= m+1}^{k-1} J(\theta_{\sigma(i)}, \chi_{i - m}, \mu) + \sum_{i= 0}^{l} \alpha_i \ln \frac{\gamma_i}{\alpha_i} \Big]\]

We are now going to make a change of variables and introduce the new $\overline{\gamma}_i$ in order to decouple the terms depending on the $\chi_i$ from the terms depending on the $\overline{\gamma}_i$. Namely we will let for $i \in \llbracket 0,l+1 \rrbracket$:

\[ \overline{\gamma}_i = \prod_{j= - m}^{-1} \frac{\chi_j(\mu_j - \mu_i)}{\mu_j(\chi_j - \mu_i)} \prod_{j= 1}^{k -m -1} \frac{\chi_j(\mu_{l+j+1} - \mu_i)}{\mu_{l+j+1}(\chi_j - \mu_i)} \gamma_i .\]

Let us prove that the fonction $\Phi : (\gamma,\chi) \mapsto (\overline{\gamma}, \chi)$ is a bijection of $\mathcal{D}$ onto the domain $\mathcal{D}'$ defined as: 

\[\mathcal{D}' = \Big\{ ( \overline{\gamma}, \chi) \in (\R^{+,*})^{l +2} \times \prod_{i= -m}^{ -1} ] \mu_{ i} , \mu_{i+1}[  \times \prod_{i= 1}^{ k-m-1} ] \mu_{ l+i} , \mu_{l+i+1}[ : \sum_{i=0}^{l+1} \overline{\gamma}_i =1  \Big\} .\]
First let us check that $\Phi(\mathcal{D}) \subset \mathcal{D}'$. It is easy to check that each term of the product that defines $\overline{\gamma}_i$ is positive so that $\overline{\gamma}_i >0$. Then let us define $F$ the following rational function:

\[ F(Y) = \prod_{j= - m}^{-1} \frac{\chi_j(Y \mu_j - 1)}{\mu_j(Y\chi_j - 1)} \prod_{j= 1}^{k -m -1} \frac{\chi_j(\mu_{l+j+1} Y - 1)}{\mu_{l+j+1}(Y\chi_j - 1)} \]

so that $\overline{\gamma}_i = F( \mu_i^{-1}) \gamma_i$. If we write down the partial fraction decomposition of $F$, we have that there exists a family of reals $a_j$ such that: 

\[ F(Y) = 1 + \sum_{j= -m}^{-1} \frac{a_j}{Y \chi_j -1} + \sum_{j= 1}^{k-m-1} \frac{a_j}{Y \chi_j -1} .\]

Furthermore, since $F(\mu_i^{-1}) =0$ for $i <0$ and $i > l+1$, we have that
\begin{eqnarray*} \sum_{i=0}^{l+1} \overline{\gamma}_i &=& \sum_{i=0}^{l=1} F(\mu_i^{-1}) \gamma_i \\ &=& \sum_{i=-m}^{l+k-m} F( \mu_i^{-1}) \gamma_i \\ &=& \sum_{i=-m}^{l+k-m} \gamma_i + \sum_{j=-m}^{-1}\sum_{i=-m}^{l+k-m}\frac{a_j\mu_i\gamma_i}{\chi_j - \mu_i} + \sum_{j=1}^{k-m-1}\sum_{i=-m}^{l+k-m}\frac{a_j\mu_i\gamma_i}{\chi_j - \mu_i} \\
	&=&1
 \end{eqnarray*} 
Where we used the interlacing equation satisfied by $\gamma$ and $\chi$ in the last line. Furthermore, $F(\mu_i^{-1}) >0$ so $\overline{\gamma}_i > 0$. 
To prove the bijection, one can refer to \cite{GuiHus} where a similar change of variable is made in the proof of proposition 14. In particular we have that for $i \in\llbracket -m, l+k-m \rrbracket \setminus \llbracket 0, l+1 \rrbracket$, $\gamma_i = - (\sum_{j=0}^{l+2} G_i(\mu_j^{-1}) \overline{\gamma_j})/ G_i( \mu_{j}^{-1}) $ where $G_i(X) = F(X)(X \mu_i - 1)^{-1}$. 

%
%
%
%
%
%

Let us now compare $\sum \mu_i \gamma_i$ with $\sum \mu_i \overline{\gamma}_i$, for this, let us write down the partial fraction decomposition for $F(Y)/Y$: 

\[ \frac{F(Y)}{Y} = \frac{b}{Y} +  \sum_{j= -m}^{-1} \frac{b_j}{Y \chi_j -1} + \sum_{j= 1}^{k-m-1} \frac{b_j}{Y \chi_j -1} \]

where 

\[ b = \prod_{j= - m}^{-1} \frac{\chi_j}{\mu_j} \prod_{j= 1}^{k - m -1} \frac{\chi_j}{\mu_{l+j+1}} \]
so that, substituting $Y$ for $\mu_{i}^{-1 }$, multiplying by $\gamma_i$ and summing, one obtains:

\[ \sum_{i=0}^{l+1} \mu_i \overline{\gamma_i} = b \sum_{i=-m}^{l+k-m} \mu_i \gamma_i. \] 

Therefore, we can write that 

\begin{eqnarray*} J(\bar{\theta},\{\mu_i\}, \{\alpha_i \}) &=& \frac{\beta}{2}\sup_{(\overline{\gamma}, \chi) \in \mathcal{D}'} \Big[ \theta_1 \ln \Big( \sum_{i= 0}^{l + 1} \mu_i \overline{\gamma}_i\Big) + \sum_{i= 0}^{l} \alpha_i \ln \frac{\overline{\gamma}_i}{\alpha_i} \\ 
	&+ &\sum_{i=1}^{m} \Big( J(\theta_{\sigma(i)}, \chi_ { i - m+1}, \mu) + (\theta_1 +1)\ln \mu_{i - m+1}  - (\theta_1 +1) \ln \chi_{i - m+1} \\ &+& \sum_{j=0}^l \alpha_j \ln( \mu_j - \chi_{i-m+1}) -\sum_{j=0}^l \alpha_j \ln( \mu_j - \mu_{i-m+1}) \Big)
	 \\ &+&\sum_{i= m+1}^{k-1} \Big( J(\theta_{\sigma(i)}, \chi_{i -m}, \mu) + (\theta_1 +1) \ln \mu_{l + i -m+1} - (\theta_1 +1)\ln \chi_{i- m}  \\
	 &+& \sum_{j=0}^l \alpha_j \ln(  \chi_{i-m} - \mu_j) -\sum_{j=0}^l \alpha_j \ln( \mu_{l+ i-m+1} - \mu_j ) \Big) \Big].
\end{eqnarray*}

We define the quantities $H_i$ for $i = 1,...,k-1$ as follows : 

\[ H_i(\chi)  = J(\theta_{\sigma(i)}, \chi, \mu) - (\theta_1 + 1) \ln \chi + \sum_{j=0}^l \alpha_j \ln( |\mu_j - \chi|)  \]

so that: 

	\begin{eqnarray*} J(\bar{\theta},\{\mu_i\}, \{\alpha_i \}) &=& \frac{\beta}{2}\sup_{(\overline{\gamma}, \chi) \in \mathcal{D}'} \Big[ \theta_1 \ln \Big( \sum_{i= 0}^{l + 1} \mu_i \overline{\gamma}_i\Big) + \sum_{i= 0}^{l} \alpha_i \ln \frac{\overline{\gamma}_i}{\alpha_i} \\ 
		&+ &\sum_{i=1}^{m} \Big( H_i(\chi_{i-m+1})+ (\theta_1 +1)\ln \mu_{i - m+1}   -\sum_{j=0}^l \alpha_j \ln( \mu_j - \mu_{i-m+1}) \Big)
		\\ &+&\sum_{i= m+1}^{k-1} \Big( H_i(\chi_{i-m})+ (\theta_1 +1) \ln \mu_{l + i -m}  -\sum_{j=0}^l \alpha_j \ln( \mu_{l+ i-m} - \mu_j ) \Big) \Big].
	\end{eqnarray*}

Using the fact that the domain $\mathcal{D}'$ is a product, this can be further simplified in:

\begin{eqnarray*} J(\bar{\theta},\{\mu_i\}, \{\alpha_i \}) &=& \frac{\beta}{2}\sup_{\overline{\gamma}_i \geq 0 \atop \sum_i \overline{\gamma}_i = 1} \Big[ \theta_1 \ln \Big( \sum_{i= 0}^{l + 1} \mu_i \overline{\gamma}_i\Big) + \sum_{i= 0}^{l} \alpha_i \ln \frac{\overline{\gamma}_i}{\alpha_i} \Big]\\ 
	&+ &\frac{\beta}{2}\sum_{i=1}^{m}\Big( \sup_{\chi \in ] \mu_{i-m+1}, \mu_{i-m+2} [ }  H_i(\chi)+ (\theta_1 +1)\ln \mu_{i - m+1}   -\sum_{j=0}^l \alpha_j \ln( \mu_j - \mu_{i-m+1}) \Big)
	\\ &+&\frac{\beta}{2}\sum_{i= m+1}^{k-1} \Big( \sup_{\chi \in ] \mu_{l+i-m}, \mu_{l+i-m+1} [ } H_i(\chi)+ (\theta_1 +1) \ln \mu_{l + i -m+1}  -\sum_{j=0}^l \alpha_j \ln( \mu_{l+ i-m+1} - \mu_j ) \Big). 
\end{eqnarray*}

Here, following the analysis of the same optimization problem performed in subsection \ref{rank1dis}, the first line is going to be equal to $J(\theta_1, \mu_{l+1}, \mu)$. Let us determine the variations of $H_i$ on the different intervals considered. 	
For $i =1,...,m$
we have that $H_i$ is decreasing and therefore its supremum on $] \mu_{i -m -1}, \mu_{i-m} [$ is $H_i(\theta_1, \mu_{i-m-1})$ and the term of index $i$ simplifies in $J(\theta_{\sigma(i)}, \mu_{i-m-1}, \mu)$.

For $i = m+1,..., k-1$, if $\chi \leq T_{\mu}^{-1}( \theta_{\sigma(i)})$, $\chi \mapsto J(\theta_{\sigma(i)}, \chi,\mu)$ is constant and therefore:

\[ H_i'(\chi) = - \frac{\theta_1 +1}{\chi} + \sum_{j=0}^l \alpha_j \frac{1}{ \chi - \mu_j} = G_{\mu}( \chi) - \frac{\theta_1 +1}{\chi}.\]

This derivative is positive or null if $\chi \leq T_{\mu}^{-1}(\theta_1)$ and negative or null if $\chi \geq T_{\mu}^{-1}(\theta_1)$. Now, if $\chi \geq T_{\mu}^{-1}( \theta_{\sigma(i)})$, we have 

\[ H_i'(\chi) = \frac{\theta_{\sigma(i)} -\theta_1}{\chi}. \]

Let us look first at the terms of indices $i \geq p+1$, that is such that $\theta_{\sigma(i)} \geq \theta_1$. Then, since $T_{\mu}^{-1}(\theta_{\sigma(i)}) \leq T_{\mu}^{-1}(\theta_1) $, $H_i$ is increasing and therefore, on any bounded intervals attains its supremum for values that tends to its rightmost point, and we have:

\[ \sup_{\chi \in ] \mu_{l+i-m}, \mu_{l+i-m+1} [ } H_i(\chi) = H(\mu_{l+i-m+1}). \]
Using the expression of $H_i$, we have that the term of index $i$ in the preceding sum simplifies in $J(\theta_{\sigma(i)}, \mu_{l+i -m+1})$. 
 We end up with the simplified expression: 
 
 \begin{eqnarray*}J(\bar{\theta},\{\mu_i\}, \{\alpha_i \}) &=& \frac{\beta}{2}J(\theta_1, \mu_{l+1}, \mu)\\ 
 &+ &\frac{\beta}{2}\sum_{i=1}^{m} J(\theta_{\sigma(i)}, \mu_{i-m-1}, \mu)
 \\ &+&\frac{\beta}{2}\sum_{i= m+1}^{p } \Big( \sup_{\chi \in ] \mu_{l+i-m}, \mu_{l+i-m+1} [ } H_i(\chi)+ (\theta_1 +1) \ln \mu_{l + i -m+1}  -\sum_{j=0}^l \alpha_j \ln( \mu_{l+ i-m+1} - \mu_j ) \Big) \\
 &+& \frac{\beta}{2}\sum_{i=p+1}^{k-1} J(\theta_{\sigma(i)}, \mu_{i - m +l+1}, \mu).
 \end{eqnarray*}
 
Now let us look at the variation of $H_i$ for $m+1 \leq i \leq p$. Since $0 \leq \theta_{\sigma(i)} \leq \theta_1$, $T_{\mu}^{-1}( \theta_1) \leq T_{\mu}^{-1}(\theta_{\sigma(i)})$ and $H_i$ is increasing till $T_{\mu}^{-1}(\theta_1)$ and decreasing after. We let

\[ q = \sup \{ i \in \llbracket m+1,p+1 \rrbracket , T_{\mu}( \mu_{l+i-m}) \geq \theta_{1}\} \]

We will assume here that $q \neq - \infty , p+1$, we leave those two edge cases to the reader since they are in fact easier to deal with.

For every $i < q$ we have that:
\[ \sup_{\chi \in ] \mu_{l+i-m}, \mu_{l+i-m+1} [ } H_i(\chi) = H_i( \mu_{l+i-m+1}) = J(\theta_{\sigma(i)}, \mu_{l+i-m+1},\mu) - (\theta_1 +1) \ln \mu_{l+i -m +1} + \sum_{j=0}^l \alpha_j \ln( \mu_{l+i-m+1} - \mu_j)\]

Furthermore since $\theta_{\sigma(i)} \leq \theta_1 \leq  T_{\mu}^{-1}(\mu_{l+i-m+1})$, we have $J(\theta_{\sigma(i)}, \mu_{l+i-m+1},\mu) = J(\theta_{\sigma(i)}, \mu_{l+i-m}, \mu)$ and therefore the term of index $i$ in the sum above is $J(\theta_{\sigma(i)}, \mu_{l+i-m}, \mu)$.

For $i = q$, we have that

\[ \sup_{\chi \in ] \mu_{l+i-m}, \mu_{l+i-m+1} [ } H_i(\chi) = H_i( T_{\mu}^{-1}( \theta_1)) \]

and therefore the term of index $ i=q$ is: 

\[ J(\theta_{\sigma(q)},T_{\mu}^{-1}(\theta_1), \mu) + (\theta_1+1)( \ln \mu_{l+q-m+1} - \ln T_{\mu}^{-1}(\theta_1)) - \sum_{j=0}^l \alpha_j \ln( \mu_{l+q-m+1} - \mu_j) + \sum_{j=0}^l \alpha_j \ln( T_{\mu}^{-1}( \theta_1) - \mu_j) \]

Furthermore, looking back at the expression of $J$, we have that the expression above is: 
\[ J(\theta_{\sigma(q)},\mu_{l+q-m}, \mu) + J(\theta_1, \mu_{l+q-m+1},\mu) -J(\theta_1, T^{-1}_{\mu}(\theta_1),\mu) . \]

For every $ i > q$, we have that 
 
\[ \sup_{\chi \in ] \mu_{l+i-m}, \mu_{l+i-m+1} [ } H_i(\chi) = H_i( \mu_{l+i-m}). \]

And therefore, the term of index $i$ is: 
\[ J(\theta_{\sigma(i)},\mu_{l+i-m}, \mu) + (\theta_1+1)( \ln \mu_{l+i-m+1} - \ln \mu_{l+i-m}) - \sum_{j=0}^l \alpha_j \ln( \mu_{l+i-m+1} - \mu_j) + \sum_{j=0}^l \alpha_j \ln( \mu_{l+i-m} - \mu_j). \]

There again, this term is equal to: 
\[ J(\theta_{\sigma(i)},\mu_{l+i-m}, \mu) + J(\theta_1, \mu_{l+i-m+1},\mu) -J(\theta_1, \mu_{l+i-m},\mu) .\]

Rearranging the terms in $J$, we have 

\begin{eqnarray*}
	J(\bar{\theta},\{\mu_i\}, \{\alpha_i \}) &=& \frac{\beta}{2}J(\theta_1, \mu_{l+1}, \mu)\\ 
	&+ &\frac{\beta}{2}\sum_{i=1}^{m} J(\theta_{\sigma(i)}, \mu_{i-m-1}, \mu)
	\\ &+&\frac{\beta}{2}\sum_{i= m+1}^{q-1 } J(\theta_{\sigma(i)},\mu_{l+i-m},\mu) \\
		&+& J(\theta_{\sigma(q)},T_{\mu}^{-1}(\theta_1), \mu) + J(\theta_1,\mu_{l+q-m+1}, \mu) - J(\theta_1,T_{\mu}^{-1}(\theta_1), \mu)
		\\
	&+&
	\frac{\beta}{2}\sum_{i= q+1}^{p }J(\theta_{\sigma(i)},\mu_{l+i-m}, \mu) + J(\theta_1,\mu_{l+i-m+1}, \mu) - J(\theta_1,\mu_{l+i-m}, \mu)\\
	&+& \frac{\beta}{2} \sum_{i=p+1}^{k-1} J(\theta_{\sigma(i)}, \mu_{i - m +l+1}, \mu)
\end{eqnarray*}

Since $\mu_{l+1} \leq T_{\mu}^{-1}(\theta_1)$ (we assumed that $q > m+1$), $J(\theta_1, \mu_{l+1}, \mu)=J(\theta_1, T_{\mu}^{-1}(\theta_1), \mu)$ and the sum above simplifies in 
\begin{eqnarray*}
	J(\bar{\theta},\{\mu_i\}, \{\alpha_i \}) &=& \frac{\beta}{2}J(\theta_1, \mu_{l+ p-m+1}, \mu)\\ 
	&+ &\frac{\beta}{2}\sum_{i=1}^{m} J(\theta_{\sigma(i)}, \mu_{i-m-1}, \mu)
	\\ &+&\frac{\beta}{2}\sum_{i= m+1}^{q-1 } J(\theta_{\sigma(i)},\mu_{l+i-m},\mu) \\
	&+& J(\theta_{\sigma(q)},\mu_{l+q-m}, \mu) \\
	&+&\frac{\beta}{2}\sum_{i= q+1}^{p }J(\theta_{\sigma(i)},\mu_{l+i-m}, \mu)\\
	&+& \frac{\beta}{2} \sum_{i=p+1}^{k-1} J(\theta_{\sigma(i)}, \mu_{i - m +l+1}, \mu)
\end{eqnarray*}
which is the expected limit and therefore the lemma is proved. 

\end{proof}
For $\theta_1 \leq 0$, the computation remain the same except that the role of the  indices $i$ between $1$ and $m$ and the role of the  indices $i$ between $m+1$ and $k$ are switched.
\subsection{Case of a general $\mu$}\label{rank2cont}

To generalize our result, one first need a multi-dimensional version of the continuity lemma \ref{cont}:
\begin{lemma}
	Let $\kappa > 0$. The function 

\[ (\theta,\lambda,\mu) \mapsto \sum_{i=1}^k J(\theta_i, \lambda_i, \mu) \]
defined on the following domain: 
\[ \mathcal{D}^{m, k-m} := \{(\theta, \lambda, \mu) \in  (\R^{-})^m \times (\R^{+})^{k-m} \times (\R^{+,*})^k \times \mathcal{P}(\R^+) : \lambda_1 \leq ... \leq \lambda_{m} \leq l( \mu) \leq r(\mu) \leq \lambda_{m+1} \leq ... \leq \lambda_k \leq \kappa \} \]

endowed with the topology induced by the product topology, is continuous. 
\end{lemma}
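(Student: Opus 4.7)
The plan is to reduce to the one-dimensional continuity already established in Lemma \ref{cont}. Since the target function is a finite sum, it suffices to prove that each summand $(\theta_i, \lambda_i, \mu) \mapsto J(\theta_i, \lambda_i, \mu)$ is continuous on the projection of $\mathcal{D}^{m,k-m}$ onto the relevant coordinates; a finite sum of continuous functions is then continuous on the product topology.

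Next I would split the indices into two groups according to the sign convention baked into $\mathcal{D}^{m,k-m}$. For $i \leq m$, the defining constraints give $\theta_i \leq 0$, $\lambda_i \in \R^{+,*}$, $\lambda_i \leq l(\mu)$, and $r(\mu) \leq \kappa$, so that $(\theta_i, \lambda_i, \mu)$ embeds into the domain $\mathcal{D}^-$ of Lemma \ref{cont} and the continuity conclusion there applies directly. For $i > m$, the constraints give $\theta_i \geq 0$, $\lambda_i \in \R^{+,*}$, and $\lambda_i \geq r(\mu)$; whenever $m \geq 1$, the ordering forces $l(\mu) \geq \lambda_1 > 0$, so $(\theta_i, \lambda_i, \mu) \in \mathcal{D}^+$ and Lemma \ref{cont} again applies.

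The main obstacle is the edge case $m = 0$, in which $l(\mu)$ may vanish and the triple $(\theta_i, \lambda_i, \mu)$ does not literally sit in $\mathcal{D}^+$. To handle it I would revisit the proof of Lemma \ref{cont} and observe that the hypothesis $l(\mu) > 0$ plays no actual role in the $\mathcal{D}^+$ analysis: the quantity $d = \frac{\theta_i + 1}{\theta_i} c$ always satisfies $d \geq \lambda_i > 0$, so on compact subsets of the parameter space the integrand $x \mapsto \log|d - x|$ stays uniformly bounded above on $[0, r(\mu)]$, with at most an integrable logarithmic singularity at $x = d$ when $d = r(\mu)$. The partial-derivative bounds on $J$ in $\theta$ and $\lambda$ used in Lemma \ref{cont} only require $\lambda > 0$, and the weak-convergence argument for $\int \log|d_n - x|\, d\mu_n(x)$ carries over by splitting the support of $\mu_n$ into a small neighborhood of $d$ (where one exploits the integrability of the log singularity via dominated convergence) and its complement (where $\log|d_n - x|$ converges uniformly to $\log|d - x|$).

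Finally, the extensions of $J$ at $\theta_i = 0$ and $\theta_i = -1$ defined in Theorem \ref{maintheo} are already absorbed into the continuity claim of Lemma \ref{cont}, so no additional work is needed at these points, and the lemma follows by summing the continuous summands over $i = 1, \ldots, k$.
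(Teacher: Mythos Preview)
Your proposal is correct and is exactly the approach the paper takes: the paper's own proof is the single sentence ``This lemma can be easily proved from Lemma~\ref{cont},'' and you have spelled out precisely that reduction. Your observation about the edge case $m=0$ is a nice bit of extra care; note that the proof of Lemma~\ref{cont} already silently handles it, since the argument there passes to the domains $\mathcal{D}_{M,A,\epsilon}^+$ where the constraint is only $0\le l(\mu)$, so no genuinely new work is required.
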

This lemma can be easily be proved from Lemma \ref{cont}.  Then, using again continuity arguments, we can see that the result of the previous subsection still stand for sequences of matrices $(X_N)_{N \in \N}$ of the same form where we relaxed the conditions on the $\mu_i$ to only $\mu_{-m} \leq  \mu_{- m+1} \leq ... \leq \mu_{l + k-m}$. Then for general eigenvalue distribution $\mu$ we proceed the same way as in subsection \ref{rank1cont}, by approximating uniformly the eigenvalues.

\section{Conclusion and open questions}
In this note, we proved one of the main results of \cite{MerPot} in the case $\beta =1,2$ as well as the conjecture given as the last Remark of section 2 of \cite{MerPot}. An interesting thing that this result teaches us is that, although the lack of symmetry of the integral in the $\theta_i$ disappears asymptotically for parameters $\bar{\theta}$ whose support lie on the $k$ first indices, it is no more the case if the support is not restricted to $\llbracket 1,k \rrbracket$, even if it remains finite. To see this, one can take some sequence $X_N$ satisfying the hypothesis of Theorem \ref{maintheo} whose largest eigenvalue converges toward $\lambda$ and whose eigenvalue distribution converges toward $\mu$, and let $\theta = (1,0,...,0)$ and $\theta' = (0,....,0,1)$. then, one can write that 
\[ \Delta_{\frac{\beta N}{2}\theta'}(U^* X_N U) =\Big( \frac{\det(U^* X_N U)}{\det( [U^* X_N U] _{N -1})} \Big)^{\frac{\beta N}{2}} = (U^* X^{-1}_N U)_{1,1}^{- \frac{\beta N}{2}} \]
and therefore: 
\[ I_N(\theta',X_N) = I_N(-\theta,X_N^{-1}) \]

Thus 

\[\lim_{N \to \infty}  N^{-1} \log I_N( \theta',X_N) = \frac{\beta}{2}J \Big( -1,  \lambda^{-1}, \frac{1}{\mu} \Big) \]
where $1/\mu$ is the push-forward of $\mu$ by $x \mapsto 1/x$. But, since $-1 \geq T_{1/\mu}( 1/ \lambda)$, one has: 

\[ J \Big( -1,  \lambda^{-1}, \frac{1}{\mu} \Big)= - \int \log |x| d \frac{1}{\mu}(x) = \int \log x d \mu(x) \]
 This quantity is in general different from $J(1, \lambda, \mu)$ (since in particular, it never depends on $\lambda$ whatever its value or the value of $\mu$). An interesting result would be to study given $\theta \in \R$ and $\theta^{(M)} = ( 0,...,0, \underbrace{\theta}_{\text{position } M},....0)$, the behavior of $N^{-1} \log I_N( \theta^{(M(N))}, X_N)$ depending of the behavior of the sequence $M(N)$.

Another natural question already asked by Mergny and Potters is the question of the behavior of the integral for arguments $\bar{\theta}$ whose support tends to $\infty$. For instance, if $\bar{\theta}^N = (\theta_1^N,...,\theta_{k(N)}^N)$ with $k(N)$ some sequence whose limit is $+ \infty$, can we prove the convergence of $(k(N) N) ^{-1} \log I_N(\bar{\theta}^N, X_N)$ toward some limit and then does the lack of symmetry in $\theta$ disappears at $\infty$?

\bibliographystyle{plain}

\bibliography{biblioStransform}
\end{document}